\numberwithin{equation}{section}
\def\kb{\overline{K}}
\def\bh{{\mathbb H}}
\def\bz{{\mathbb Z}\,}
\def\bq{{\mathbb Q}}
\def\bg{{\mathbb G}}
\def\spec{{\rm{Spec}}\,}
\def\der{\le\rm{der}}
\def\rad{\e{\rm{rad}}}
\def\be{\kern -.1em}
\def\lbe{\kern -.025em}
\def\pic{{\rm{Pic}}}
\def\hom{{\rm{Hom}\e}}
\def\rhom{{\rm{RHom}}\e}
\def\krn{{\rm{Ker}}\e }
\def\cok{{\rm{Coker}}}
\def\ms{\mathscr }
\def\tor{\e\rm{tor}}
\def\Gtil{{\widetilde{G}}}
\def\ra{\rightarrow}
\def\e{\kern 0.08em}
\def\le{\kern 0.03em}
\def\ng{\kern -0.04em}
\def\g{\varGamma}
\def\krn{{\rm{Ker}}\,}
\def\cok{{\rm{Coker}}\,}
\newtheorem{lemma}{Lemma}[section]
\newtheorem{theorem}[lemma]{Theorem}
\newtheorem{corollary}[lemma]{Corollary}
\newtheorem{proposition}[lemma]{Proposition}
\theoremstyle{definition}
\newtheorem{definition}[lemma]{Definition}
\theoremstyle{remark}
\newtheorem{remark}[lemma]{Remark}
\newtheorem{remarks}[lemma]{Remarks}
\newtheorem{examples}[lemma]{Examples}
\begin{document}

\title[Flasque resolutions of reductive group schemes]{Flasque resolutions of
reductive group schemes}

\subjclass[2000]{Primary 20G35; Secondary 20G30}

\author{Cristian D. Gonz\'alez-Avil\'es}
\address{Departamento de Matem\'aticas, Universidad de La Serena, Chile}
\email{cgonzalez@userena.cl}

\keywords{Reductive group schemes, flasque resolutions, abelianized
cohomology}

\thanks{The author is partially supported by Fondecyt grant
1080025}

\maketitle

\begin{abstract} We generalize J.-L.Colliot-Th\'el\`ene's
construction of flasque resolutions of reductive group schemes over
a field to a broad class of base schemes.
\end{abstract}

\section{Introduction}

In \cite{ct}, J.-L. Colliot-Th\'el\`ene constructed so-called
flasque resolutions of (connected) reductive algebraic groups over a
field. The case of tori was previously investigated by J.-L.
Colliot-Th\'el\`ene and J.-J. Sansuc in \cite{cts1} and \cite{cts2}.
The object of this paper is to extend Colliot-Th\'el\`ene's
construction to reductive group schemes over a broad class of base
schemes $S$, including all connected and locally noetherian normal
schemes. This is the subject of Section 3. In particular, we show
that the functor which assigns to a reductive $S$-group scheme $G$
its (Borovoi) algebraic fundamental $S$-group scheme $\pi_{1}(G)$ is
{\it exact} (see Theorem 3.14). In Section 4, which concludes the
paper, we give some applications of flasque resolutions to Borovoi's
abelianized cohomology theory over $S$. Further applications (over a
Dedekind base) are given in \cite{ga2}.

\section*{Acknowledgements}
I am very grateful to Brian Conrad for providing the proof of the
key Proposition 2.8 and for helping me with the proofs of a number
of results contained in Section 2. I also thank two anonymous
referees for many valuable suggestions.

\section{Preliminaries}

Recall \cite{ega}, $\text{IV}_{\be 1}$, Chap.0, \S 23.2.1, that a local ring $A$
with residue field $k$ is called {\it geometrically unibranched} if
$A_{\text{red}}$ is integral and the integral closure of
$A_{\text{red}}$ (in its field of fractions) is a local ring whose
residue field is a purely inseparable extension of $k$. An equivalent condition is that the spectrum of the strict henselization of $A$ is an irreducible scheme \cite{ega}, $\text{IV}_{\be 4}$, 18.8.15. A scheme $S$
is called geometrically unibranched if the local ring ${\mathcal
O}_{S,s}$ is geometrically unibranched for every point $s$ of $S$.
Clearly, a local integral and integrally closed ring is
geometrically unibranched, whence a normal scheme is geometrically
unibranched.

We will work over an arbitrary non-empty scheme when possible, but many of our results will apply to the following restricted class of schemes:

\begin{definition} A non-empty scheme $S$ is called {\it admissible} if
it is connected, locally noetherian and geometrically unibranched.
If $S=\spec A$ is affine, we will also say that $A$ is admissible.
\end{definition}

\smallskip

Now let $S$ be a non-empty scheme. All $S$-group schemes appearing
in this paper will be of finite type. If $M$ is an $S$-group scheme
of finite type and of multiplicative type, then $M$ is
quasi-isotrivial, i.e., there exists a surjetive \'etale morphism
$S^{\e\prime}\ra S$ such that $M\be\times_{S} S^{\e\prime}$ is
diagonalisable \cite{sga3}, X, Corollary 4.5. Further, the functor
$\underline{\hom}_{\, S\text{-gr}}(M,\bg_{m,S})$ is represented by a
finitely generated twisted constant $S$-group scheme which is
denoted by $M^{*}$ \cite{sga3}, X, Theorem 5.6. On the other hand,
if $X$ is a finitely generated twisted constant $S$-group scheme,
the functor $\underline{\hom}_{\, S\text{-gr}}(X,\bg_{m,S})$ is
represented by an $S$-group scheme of finite type and of
multiplicative type which is denoted by $X^{*}$ \cite{sga3}, X,
Proposition 5.3. The functors $M\to M^{*}$ and $X\to X^{*}$ are
mutually quasi-inverse anti-equivalences between the categories of
$S$-group schemes of finite type and of multiplicative type and that
of finitely generated twisted constant $S$-group schemes
\cite{sga3}, X, Corollary 5.9. Further, $M\to M^{*}$ and $X\to
X^{*}$ are exact functors (see \cite{sga3}, VIII, Theorem 3.1, and
use faithfully flat descent). Now, if $T$ is an $S$-torus, the
functor $\underline{\hom}_{\, S\text{-gr}}(\bg_{m,S},T\e)$ is
represented by a (finitely generated) twisted constant $S$-group
scheme which is denoted by $T_{*}$ \cite{sga3}, X, Theorem 5.6.
There exist canonical isomorphisms
\begin{equation}\label{bas0}
T^{*}\simeq(T_{*})^{\vee}:=\hom_{S\lbe\text{-gr}}(T_{*},\bz_{\! S})
\end{equation}
and
\begin{equation}\label{bas}
T\simeq\hom_{ S\lbe\text{-gr}}(T^{*},\bg_{m,S})\simeq
T_{*}\otimes_{\e\bz_{\! S}}\!\bg_{m,S}.
\end{equation}
Note that, if $\mu$ is a finite $S$-group scheme of multiplicative
type, then
$$
\mu_{*}:=\hom_{S\lbe\text{-gr}}(\bg_{m,S},\mu)=0.
$$

Now assume that $S$ is admissible. Then an $S$-group scheme of
finite type and of multiplicative type is, in fact, {\it isotrivial}
\cite{sga3}, X, Theorem 5.16, i.e., it is split by a {\it finite}
surjetive \'etale morphism $S^{\e\prime}\ra S$. By \cite{sga3}, X,
beginning of \S 1, $S^{\e\prime}$ can be assumed to be connected and
Galois over $S$ with (finite) Galois group $\varDelta$. The
correspondence $M\mapsto M^{*}\be(S^{\e\prime})$ defines an
anti-equivalence between the categories of $S$-group schemes of
finite type and of multiplicative type split by $S^{\e\prime}\ra S$
and that of finitely generated $\varDelta$-modules \cite{sga3}, X,
Corollary 1.2. The reverse anti-equivalence transforms exact
sequences of finitely generated $\varDelta$-modules into exact
sequences of $S$-groups of finite type and of multiplicative type,
inducing exact sequences of fppf sheaves on $S$. Now, if $\ms
L_{\be\varDelta}$ is the category of $\bz$-free and finitely
generated $\varDelta$-modules, then $\ms L_{\be\varDelta}$ is
anti-equivalent to itself via linear duality $X\mapsto
X^{\vee}:=\hom(X,\bz\!)$. An object $X\in\ms L_{\varDelta}$ is
called a {\it permutation} module if it admits a $\varDelta$-stable
$\bz$-basis. It is called {\it invertible} if it is a
$\varDelta$-direct summand of a permutation module. A module $X$ in $\ms L_{\varDelta}$ is called {\it flasque} if the Tate cohomology group $\widehat{H}^{-1}(\varTheta,X)$ is zero for every subgroup $\varTheta$ of $\varDelta$. Now let $T$ be an
$S$-torus. Then $T$ is called flasque (respectively,
quasi-trivial, invertible) if there exists a connected Galois cover
$S^{\e\prime}\ra S$, with Galois group $\varDelta$, splitting $T$
and such that $T^{*}\be(S^{\e\prime})$ is a flasque (respectively,
permutation, invertible) $\varDelta$-module. An
invertible $S$-torus (in particular, a quasi-trivial $S$-torus) is
flasque. See \cite{cts2}, \S\S 1 and 2, for the
basic theory of flasque $S$-tori.

We often restrict our attention to admissible base schemes because
of the following lemma.

\begin{lemma} Let $S$ be an admissible scheme. Then any extension of a quasi-trivial
$S$-torus by a flasque $S$-torus is split.
\end{lemma}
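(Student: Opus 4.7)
The plan is to split the given extension by reducing to a short exact sequence of $\varDelta$-modules over a common isotrivial splitting cover and invoking a classical $\ext^{1}$-vanishing result.

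Given an extension $1\to F\to E\to Q\to 1$ with $F$ flasque and $Q$ quasi-trivial, I first invoke the admissibility of $S$ and the isotriviality statement recalled in the preliminaries to produce a single connected finite Galois cover $S^{\e\prime}\to S$, with Galois group $\varDelta$, that splits $F$, $E$ and $Q$ simultaneously. Applying the exact contravariant anti-equivalence $T\mapsto T^{*}\!(S^{\e\prime})$ between $\varDelta$-split $S$-tori and finitely generated $\bz$-free $\varDelta$-modules then converts the extension into a short exact sequence of $\varDelta$-modules
$$
0\lra Q^{*}\!(S^{\e\prime})\lra E^{*}\!(S^{\e\prime})\lra F^{*}\!(S^{\e\prime})\lra 0,
$$
in which $Q^{*}\!(S^{\e\prime})$ is a permutation module and $F^{*}\!(S^{\e\prime})$ is flasque. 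Since the anti-equivalence transforms split short exact sequences into split short exact sequences, it will be enough to split this sequence of $\varDelta$-modules.

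The obstruction to splitting lies in $\ext^{1}_{\varDelta}(F^{*}\!(S^{\e\prime}),Q^{*}\!(S^{\e\prime}))$. Writing the permutation module $Q^{*}\!(S^{\e\prime})$ as a direct sum $\bigoplus_{i}\bz[\varDelta/H_{i}]$ for subgroups $H_{i}\leq\varDelta$, I reduce by additivity to showing that $\ext^{1}_{\varDelta}(F^{*}\!(S^{\e\prime}),\bz[\varDelta/H])=0$ for an arbitrary subgroup $H\leq\varDelta$. The restriction--coinduction adjunction (applicable because $H$ has finite index in $\varDelta$) identifies this $\ext$ with $\ext^{1}_{H}(F^{*}\!(S^{\e\prime})|_{H},\bz)$, and since $F^{*}\!(S^{\e\prime})$ is $\bz$-free, a standard argument using the exact sequence $0\to\bz\to\bq\to\bq/\bz\to 0$ (whose middle term is $H$-cohomologically trivial) together with the Tate duality pairing for the finite group $H$ identifies this last $\ext$ with the Pontryagin dual of the Tate cohomology group $\widehat{H}^{\,-1}(H,F^{*}\!(S^{\e\prime}))$, which vanishes by the very definition of flasqueness.

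The heart of the matter is the final Tate-cohomology identification; everything else is bookkeeping through the anti-equivalence supplied by the preliminaries. That identification, however, is entirely classical (see for instance \cite{cts2}, \S 1), so I expect no real obstacle beyond unwinding it carefully.
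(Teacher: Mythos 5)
Your proof is correct in substance and follows the same overall strategy as the paper's --- dualize the extension to a sequence of character objects and play flasqueness off against the permutation property --- but you unwind into an explicit module-theoretic computation what the paper simply cites. The paper's proof is short: it notes that $E$ is an $S$-torus by \cite{sga3}, XVII, Proposition 7.1.1, dualizes to an extension of \'etale twisted-constant $S$-group schemes $1\to R^{*}\to E^{*}\to F^{*}\to 1$, and invokes \cite{cts2}, Lemma 2.1(i) (which uses that $S$ is connected and geometrically unibranched) to split it. You instead use admissibility to pass to a common finite Galois splitting cover $S'/S$ with group $\varDelta$, reduce to a short exact sequence of $\varDelta$-modules, and then reprove the relevant vanishing $\ext^{1}_{\varDelta}(F^{*}(S'),\bz[\varDelta/H])\simeq \ext^{1}_{H}(F^{*}(S'),\bz)\simeq \widehat{H}^{-1}(H,F^{*}(S'))^{\sim}=0$ via Shapiro's lemma and the classical Tate-cohomology duality; that chain of identifications is exactly the content of \cite{cts1}, Lemme 1 / \cite{cts2}, \S 0, so your argument is a self-contained version of the black box. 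The two routes use the hypothesis on $S$ in slightly different places: the paper needs geometric unibranchedness inside \cite{cts2}, Lemma 2.1(i) (to rule out contributions of the type $H^{1}_{\mathrm{\acute et}}(-,\bz)$, cf.\ Remark 2.3), while you need it to guarantee that all three groups, in particular $E$, are isotrivial and split by a single finite cover.

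One step you elide should be made explicit: before you can ``split $E$'' by a finite \'etale cover you must know that $E$ is an $S$-torus (equivalently, of multiplicative type), since a priori it is only an fppf extension of one torus by another. Over a general base this is not automatic from the definition and is exactly what the paper's citation of \cite{sga3}, XVII, Proposition 7.1.1 supplies. With that point added, your argument is complete; also note in passing that exactness of $0\to Q^{*}(S')\to E^{*}(S')\to F^{*}(S')\to 0$ on $S'$-sections deserves a word (it holds because after base change to the connected scheme $S'$ all three character sheaves are constant), though this is genuinely routine.
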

\begin{proof} Let $R$ (resp., $F$) be a quasi-trivial (resp., flasque) $S$-torus and let $1\ra F\ra E\ra R\ra 1$ be an extension of $R$ by $F$, i.e., $E$ is an $S$-group scheme and $R$ is the quotient of $E$ by $F$ for the fppf topology. Then $E$ is an $S$-torus (see \cite{sga3}, XVII, Proposition 7.1.1
and its proof) and the given sequence induces an extension of \'etale twisted constant $S$-group schemes $1\ra R^{*}\ra E^{*}\ra F^{*}\ra 1$. Since $S$ is connected and geometrically unibranched and $F$ is flasque, the above extension is split \cite{cts2}, Lemma 2.1(i), and hence so is the original extension.
\end{proof}

\begin{remark} The lemma fails to hold if $S$ is not admissible. For example,
if $S$ is an algebraic curve over a field having an ordinary double
point, then ${\rm{Ext}}^{1}_{S}(\bg_{m,S},\bg_{m,S})\simeq
H^{1}_{\rm{\acute{e}t}}(S,\bz)\neq 0$ (see \cite{cts2}, p.160, and
\cite{sga4}, IX, Remark 3.7).
\end{remark}

Let $S$ be a non-empty scheme. An $S$-group scheme $G$ is called {\it reductive} (respectively, {\it semisimple}, {\it simply-connected}) if it is affine and
smooth over $S$ and its geometric fibers are {\it connected} reductive (resp., semisimple, simply-connected) algebraic groups \cite{sga3}, XIX, Definition 2.7. If $G$ is a reductive $S$-group scheme, the functor $\underline{\hom}_{\e S\lbe\text{-gr}}(G,\bg_{m,S})$ is represented by a twisted-constant $S$-group scheme which is denoted by $G^{\le
*}$ \cite{sga3}, XXII, Theorem
6.2.1(i). The derived group of $G$ \cite{sga3}, XXII, Theorem
6.2.1(iv), will be denoted by $G^{\der}$. It is a normal semisimple
subgroup scheme of $G$ and the quotient
$$
G^{\tor}:=G/G^{\der}
$$
is an $S$-torus (in \cite{sga3}, XXII, 6.2, $G^{\tor}$ is denoted by
$\rm{corad}(G\e)$ and called the {\it coradical} of $G$). Note that,
since $G^{\der\le*}=0$, the exact sequence of reductive $S$-group
schemes
\begin{equation}\label{seq1}
1\ra G^{\der}\ra G\ra G^{\tor}\ra 1
\end{equation}
induces an equality $G^{\le *}=G^{\tor\le*}$.

We will write $Z(G)$ for the center of $G$, i.e., the $S$-subgroup
scheme of multiplicative type of $G$ which represents the functor
$\underline{{\rm{Centr}}}_{\e G}$ defined in \cite{sga3},
$\text{VI}_{\text{B}}$, Definition 6.1(iii). See \cite{sga3}, XII,
Proposition 4.11, and/or \cite{con}, \S3.3.

\smallskip

If $G$ is any semisimple $S$-group scheme, there exists a
simply-connected $S$-group scheme $\Gtil$ and a central isogeny
$\varphi\colon\Gtil\ra G$ (see \cite{con}, Exercise
6.5.3(i)\footnote{The solution to this exercise can be found in
\cite{har}, beginning of \S1.2.}). The pair $(\Gtil,\varphi)$ has
the following universal mapping property (which determines it up to
unique isomorphism): if $G^{\e\prime}$ is a semisimple $S$-group
scheme and $p\colon G^{\e\prime}\ra G$ is a central isogeny, then
there exists a unique central isogeny $f\colon \Gtil\ra
G^{\e\prime}$ such that $p\circ f=\varphi$. Further, the formation
of $(\Gtil,\varphi)$ commutes with arbitrary extensions of the base.
The $S$-group scheme $\Gtil$ is called the {\it simply-connected
central cover} of $\,G$, and the finite $S$-group scheme of
multiplicative type $\mu:=\krn\varphi$ is called the {\it
fundamental group} of $G$. Note that $\mu$ can be non-smooth, but
its Cartier dual $\mu^{\lbe *}$ is \'etale over $S\,$\footnote{The
reader should be warned that many authors call $\mu^{\lbe *}$ the
fundamental group of $G$. Perhaps more appropriately, $\mu^{\lbe *}$
should be called the {\it \'etale} fundamental group of $G$.}.

\begin{proposition} Let $S$ be a non-empty scheme. Then any central extension of a
simply-connected $S$-group scheme by an $S$-group scheme of
multiplicative type is split.
\end{proposition}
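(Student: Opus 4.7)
The plan is to exploit the universal property of the simply-connected central cover $(\Gtil,\varphi)$ recalled just before the statement. Given a central extension
\[
1\lra M\lra E\lra G\lra 1
\]
with $G$ simply-connected and $M$ of multiplicative type, I would first pass to the derived subgroup $E^{\der}$, show that the induced morphism $\pi\colon E^{\der}\to G$ is a central isogeny, and then, since $G$ is simply-connected (so that $(\Gtil,\varphi)$ may be taken to be $(G,\id_{G})$), apply that universal property to $\pi$ to obtain a central isogeny $f\colon G\to E^{\der}$ satisfying $\pi\circ f=\id_{G}$. Composing $f$ with the inclusion $E^{\der}\into E$ then yields a group-scheme section of $E\to G$, which is the desired splitting.

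To carry this out, I would first observe that $E$ is itself reductive: this is SGA3, XVII, Proposition 7.1.1, the same reference used in the proof of Lemma 2.2. Hence $E^{\der}$ is a normal semisimple $S$-subgroup scheme of $E$. Next, since $G$ is semisimple one has $G=G^{\der}$, so the composite $\pi\colon E^{\der}\into E\onto G$ is fppf-surjective. Its kernel $K:=E^{\der}\cap M$ is a closed $S$-subgroup of $M$, hence of multiplicative type, and it is central in $E^{\der}$ because $M$ is central in $E$. Thus $K\subseteq Z(E^{\der})$, and the latter is a finite $S$-group scheme of multiplicative type because $E^{\der}$ is semisimple; in particular $K$ is finite, so $\pi$ is a central isogeny of semisimple $S$-group schemes.

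Applying the universal property of $(\Gtil,\varphi)=(G,\id_{G})$ to $\pi$ then produces the section $f\colon G\to E^{\der}\into E$ described above, and centrality of $M$ in $E$ actually upgrades this splitting to a direct-product decomposition $E\simeq G\times_{S}M$.

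The step I expect to require the most care is the verification that $K=E^{\der}\cap M$ is genuinely finite (and in particular $S$-flat) over $S$, so that $\pi$ qualifies as a central isogeny in the sense used by the universal property of the simply-connected cover. This rests on the standard fact that the scheme-theoretic center of a semisimple $S$-group scheme is finite of multiplicative type over $S$; once that point is settled, the rest of the argument is essentially mechanical.
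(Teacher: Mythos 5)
There is a genuine gap at the very first step: the claim that $E$ is reductive. The hypothesis only puts $M$ of multiplicative type, so $M$ may be a finite, possibly non-smooth group scheme (e.g.\ $\mu_{p}$ over a base of characteristic $p$), in which case $E$ is not smooth over $S$; and even when $M$ is finite \'etale, the fibres of $E$ need not be connected (already the trivial extension $E=G\times_{S}M$ shows this). In either case $E$ is not a reductive $S$-group scheme, so $E^{\der}$ --- which in this paper only exists via SGA3, XXII, Theorem 6.2.1(iv), for \emph{reductive} group schemes --- is simply not defined, and the rest of your argument has nothing to act on. The reference you invoke (SGA3, XVII, Proposition 7.1.1, as used in Lemma 2.2) concerns extensions of tori by tori and does not give reductivity of an extension of a semisimple group by an arbitrary group of multiplicative type. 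Your direct argument is only valid when $M$ is a torus, which is precisely the content of Remark 2.5 in the paper.

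The missing idea is the reduction to that case: the paper first embeds $M$ into an $S$-torus $T$ (Borovoi, Lemma B.2) and replaces $E$ by the pushout $E_{1}$ of $M\into E$ and $M\into T$, obtaining a central extension $1\ra T\ra E_{1}\ra G\ra 1$ whose middle term \emph{is} reductive (it is a $T$-torsor over $G$, hence smooth and affine, with connected reductive fibres). From there your plan is essentially the paper's: $E_{1}^{\der}\ra G$ is a central isogeny, hence an isomorphism because $G$ is simply-connected, and $E_{1}^{\der}$ lands inside $E$ because the quotient $E_{1}/E\simeq T/M$ is commutative, which produces the section of $E\ra G$. So the later steps of your proposal are sound in spirit, but without the pushout into a torus the argument breaks down for exactly the groups $M$ (finite, possibly infinitesimal) that the proposition is mainly about; also note that the delicate point is not the finiteness of $E^{\der}\cap M$, as you suggest, but the existence and good behaviour of a derived group in the first place.
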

\begin{proof} Let $\mathcal E\colon 1\ra M\ra E\overset{\pi}\ra G \ra 1$ be a
central extension of $S$-group schemes, where $G$ is
simply-connected and $M$ is of multiplicative type. By \cite{bor2},
Lemma B.2, there exists an $S$-torus $T$ and an embedding
$M\hookrightarrow T$. Let $E_{1}$ be the pushout (see, e.g.,
\cite{wbl}, p.77) of $M\hookrightarrow E$ and $M\hookrightarrow T$.
Then there exists an exact commutative diagram
$$
\xymatrix{  & 1\ar[d]&1\ar[d] & &\\
1\ar[r] &  M  \ar[r] \ar[d] &
E\ar[r]^{\pi}\ar[d] &  G \ar[r]\ar@{=}[d] & 1\\
1\ar[r] & T \ar[r]\ar[d] &  E_{1} \ar[r]^{\pi_{1}}\ar[d] &  G \ar[r] & 1\\
& T_{1} \ar[d]\ar@{=}[r]&T_{1}\ar[d] & &\\
& 1&1 & &,}
$$
where $T_{1}=T/M$ is an $S$-torus. The $S$-group scheme $E_{1}$ is
affine and smooth over $S$ and has connected reductive fibers, i.e.,
is a reductive $S$-group scheme. Now $\pi_{\lbe 1}$ induces a
surjection $\pi_{1}^{\e\prime}\colon E_{1}^{\der}\twoheadrightarrow
G^{\der}=G$, and we have a central extension
$$
1\ra T\cap E_{1}^{\der}\ra E_{1}^{\der} \overset{\!\pi_{\be 1}^{\e\prime}}\ra G \ra 1.
$$
Since $E_{1}^{\der}$ is semisimple, $\pi_{1}^{\e\prime}$ is in fact
a central isogeny. Thus, since $G$ is simply-connected,
$\pi_{1}^{\e\prime}$ is an isomorphism. On the other hand, since
$T_{1}$ is commutative, there exists an embedding
$E_{1}^{\der}\hookrightarrow E$. Now the composition
$$
G\overset{(\be\pi_{1}^{\e\prime}\lbe)^{\lbe -\be 1}}
\longrightarrow E_{1}^{\der}\hookrightarrow E
$$
splits $\mathcal E$.
\end{proof}

\begin{remark} The above proof shows that, if
$1\ra T\ra E\overset{\pi}\ra G \ra 1$ is a central extension of a
simply-connected $S$-group scheme $G$ by an $S$-torus $T$, then $E$
is a reductive $S$-group scheme and $\pi$ induces an isomorphism
$E^{\der}\simeq G$.
\end{remark}

\begin{corollary} Let $S$ be a non-empty scheme and let $\lambda\colon G\ra H$ be
a homomorphism between semisimple $S$-group schemes. Then there
exists a unique homomorphism $\widetilde{\lambda}\colon
\widetilde{G}\ra\widetilde{H}$ such that the diagram
$$
\xymatrix{\widetilde{G}\ar[r]^{\widetilde{\lambda}}
\ar[d]^{\varphi_{_{\be G}}} & \widetilde{H}\ar[d]^{\varphi_{_{\be H}}}\\
G\ar[r]^{\lambda}& H}
$$
commutes.
\end{corollary}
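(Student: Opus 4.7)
The plan is to construct $\widetilde{\lambda}$ by applying Proposition 2.4 to a suitable pullback central extension, and to prove uniqueness by showing that the ``difference'' of two lifts lands in a group of multiplicative type and must therefore be trivial since $\widetilde{G}$ is semisimple.

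For existence, set $f=\lambda\circ\varphi_{\be G}\colon\widetilde{G}\ra H$ and form the fiber product
\[
E=\widetilde{H}\times_{\lbe H}\widetilde{G}
\]
along $f$ and $\varphi_{\be H}\colon\widetilde{H}\ra H$. Pulling back the central isogeny $\varphi_{\be H}$ yields an exact sequence
\[
1\ra\mu_{\lbe H}\ra E\ra\widetilde{G}\ra 1
\]
where $\mu_{\lbe H}=\krn\varphi_{\be H}$ is the fundamental group of $H$, a finite $S$-group scheme of multiplicative type. The extension is central since pullback preserves centrality, and $\widetilde{G}$ is simply-connected. Proposition 2.4 then produces a section $s\colon\widetilde{G}\ra E$, and setting $\widetilde{\lambda}={\rm pr}_{\e 1}\circ s\colon\widetilde{G}\ra\widetilde{H}$ gives the desired lift, since by construction $\varphi_{\be H}\circ\widetilde{\lambda}=f=\lambda\circ\varphi_{\be G}$.

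For uniqueness, suppose $\widetilde{\lambda}_{1},\widetilde{\lambda}_{2}\colon\widetilde{G}\ra\widetilde{H}$ both satisfy $\varphi_{\be H}\circ\widetilde{\lambda}_{i}=\lambda\circ\varphi_{\be G}$. Because $\mu_{\lbe H}$ is central in $\widetilde{H}$, the assignment $x\mapsto\widetilde{\lambda}_{1}(x)\cdot\widetilde{\lambda}_{2}(x)^{-1}$ defines a homomorphism of $S$-group schemes $\delta\colon\widetilde{G}\ra\mu_{\lbe H}$. Using the embedding $\mu_{\lbe H}\hookrightarrow T$ into an $S$-torus provided by \cite{bor2}, Lemma B.2, $\delta$ gives rise to an element of $\widetilde{G}^{\,*}=\hom_{\, S\text{-gr}}(\widetilde{G},\bg_{m,S})^{\oplus\dim T}$-type data; but $\widetilde{G}$ is semisimple, so $\widetilde{G}=\widetilde{G}^{\der}$ and $\widetilde{G}^{\,*}=(\widetilde{G}^{\tor})^{*}=0$. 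Hence $\delta$ is trivial and $\widetilde{\lambda}_{1}=\widetilde{\lambda}_{2}$.

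The main obstacle is really just the observation that the pullback central extension is itself central so that Proposition 2.4 applies directly; the remaining work is bookkeeping. One minor subtlety in the uniqueness argument is that $\mu_{\lbe H}$ need not be smooth, so one cannot argue fiberwise on points; but the reduction through an embedding into an $S$-torus (as in the proof of Proposition 2.4) converts the question into the vanishing of $\hom_{S\text{-gr}}(\widetilde{G},T)$, which follows from the fact that any homomorphism from a semisimple $S$-group scheme to a commutative group factors through the (trivial) maximal commutative quotient $\widetilde{G}^{\tor}$.
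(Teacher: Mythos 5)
Your proposal is correct and follows essentially the same route as the paper: existence via the pullback central extension $1\ra\mu_{H}\ra\widetilde{H}\times_{H}\widetilde{G}\ra\widetilde{G}\ra 1$ split by Proposition 2.4, and uniqueness from the vanishing of homomorphisms from the simply-connected group $\widetilde{G}$ to the commutative group $\mu_{H}$. Your uniqueness argument merely unpacks the paper's one-line assertion (the difference map $\delta$ landing in the central $\mu_{H}$, then factoring through $\widetilde{G}^{\tor}=1$), which is a harmless elaboration.
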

\begin{proof} Uniqueness follows from the fact that there are no nontrivial
homomorphisms from a simply-connected $S$-group scheme to a
commutative $S$-group scheme (more precisely,
$\hom_{S\lbe\text{-gr}}\big(\Gtil,\mu_{\lbe\lbe H}\big)=1$). Now the
pullback of the central extension $1\ra\mu_{\lbe\lbe  H}\ra
\widetilde{H}\ra H\ra 1$ along the composition
$\lambda\circ\varphi_{_{\be G}}\colon \widetilde{G}\ra H$ is a
central extension of $\widetilde{G}$ by $\mu_{\lbe\lbe  H}$. By the
proposition, this extension is split, whence there exists a section
$\widetilde{G}\ra \widetilde{H}\times_{H}\widetilde{G}$ to the
canonical projection $\text{pr}_{2}\colon
\widetilde{H}\times_{H}\widetilde{G}\ra \widetilde{G}$. The
composition $\widetilde{G}\ra
\widetilde{H}\times_{H}\widetilde{G}\overset{\text{pr}_{\lbe
1}}\longrightarrow \widetilde{H}$ is the desired map
$\widetilde{\lambda}$.
\end{proof}

\begin{remark} If $\lambda\colon G_{1}\ra G_{2}$ and $\kappa\colon G_{2}\ra
G_{3}$ are homomorphisms between semisimple $S$-group schemes, then
the uniqueness statement in the corollary implies that
$\widetilde{\kappa\circ\lambda}=\widetilde{\kappa}\circ
\widetilde{\lambda}$.
\end{remark}

\begin{proposition} Let $S$ be a non-empty scheme and let
$$
1\ra G_{1}\ra G _{ 2}\ra  G _{ 3}\ra 1
$$
be an exact sequence of semisimple $S$-group schemes. Then the
induced sequence of simply-connected central covers
$$
1\ra\widetilde{ G }_{ 1}\ra\widetilde{ G }_{ 2}\ra\widetilde{ G }_{
3}\ra 1
$$
is exact.
\end{proposition}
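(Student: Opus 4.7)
The plan is to construct the maps of the top sequence via Corollary 2.5, verify that it forms a complex by the uniqueness in Remark 2.7, and then establish exactness by reducing to the classical statement that the simply-connected central cover functor is exact on semisimple groups over an algebraically closed field.

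Concretely, let $\lambda\colon G_1\to G_2$ and $\kappa\colon G_2\to G_3$ denote the maps of the given exact sequence, and let $\varphi_i\colon\widetilde G_i\to G_i$ denote the central isogeny from the simply-connected central cover, with (finite) fundamental group $\mu_i:=\krn\varphi_i$. Corollary 2.5 provides homomorphisms $\widetilde\lambda\colon\widetilde G_1\to\widetilde G_2$ and $\widetilde\kappa\colon\widetilde G_2\to\widetilde G_3$ compatible with the $\varphi_i$, and since $\kappa\circ\lambda=1$, Remark 2.7 yields $\widetilde\kappa\circ\widetilde\lambda=\widetilde{\kappa\circ\lambda}=1$. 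Thus the top sequence is at least a complex of semisimple $S$-group schemes.

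For surjectivity of $\widetilde\kappa$, I would examine its scheme-theoretic image $H\subseteq\widetilde G_3$, a closed $S$-subgroup scheme. On any geometric fiber $\bar s$ of $S$ we have $\varphi_{3,\bar s}(H_{\bar s})=\kappa_{\bar s}(G_{2,\bar s})=G_{3,\bar s}$, so $H_{\bar s}\cdot\mu_{3,\bar s}=\widetilde G_{3,\bar s}$; since $\widetilde G_{3,\bar s}$ is connected and $\mu_{3,\bar s}$ is finite, $H_{\bar s}=\widetilde G_{3,\bar s}$. Hence $\widetilde\kappa$ is a geometrically surjective homomorphism of smooth $S$-group schemes, and so is smooth and faithfully flat. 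For exactness at $\widetilde G_2$, set $N:=\krn\widetilde\kappa$; this is smooth over $S$ as the kernel of a smooth surjection, and $\widetilde\lambda$ factors through a homomorphism $\widetilde G_1\to N$ of smooth $S$-group schemes. On each geometric fiber, the classical exactness of the simply-connected cover functor over an algebraically closed field shows that $\widetilde G_{1,\bar s}\isoto N_{\bar s}$, and the fiberwise isomorphism criterion for morphisms between smooth $S$-schemes gives $\widetilde G_1\isoto N$, completing the argument.

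The main obstacle lies in the last step: carefully justifying the passage from the classical fiberwise result over algebraically closed fields to a global statement about $S$-group schemes. This rests on two standard facts (available, e.g., from EGA IV and SGA 3, $\text{VI}_{\text B}$): that a homomorphism of smooth $S$-group schemes which is surjective on every geometric fiber is smooth and faithfully flat, and that a morphism of smooth $S$-schemes inducing an isomorphism on each geometric fiber is itself an isomorphism. A secondary point is citing the classical exactness of $\widetilde{(\cdot)}$ over an algebraically closed field, which is standard in the literature on semisimple groups.
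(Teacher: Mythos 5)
Your reduction to geometric fibers is sound, and it parallels the first step of the paper's own proof: the paper likewise observes (via the uniqueness in Remark 2.7) that the covers form a complex, and then reduces the remaining assertion to the case $S=\spec k$ with $k$ algebraically closed (it does so by noting that $\widetilde G_2/\widetilde G_1\to\widetilde G_3$ is an isogeny and checking it is an isomorphism on fibers, whereas you work with $N=\krn\widetilde\kappa$ and the fibral flatness and ``\'etale $+$ radicial $+$ surjective $\Rightarrow$ isomorphism'' criteria; both routes are fine). The compatibility of the fiberwise covers with base change, which your argument needs, is also available, since the formation of $(\widetilde G,\varphi)$ commutes with base change.

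The genuine gap is the step you dismiss as a ``secondary point'': the exactness of $G\mapsto\widetilde G$ for an exact sequence of connected semisimple groups over an algebraically closed field, \emph{as group schemes}. This is not an off-the-shelf citable theorem; it is precisely what the bulk of the paper's proof establishes, by forming $C=G_2\times_{G_3}\widetilde G_3$, showing $C$ is connected semisimple, invoking the almost-direct-product structure theory to split the central extension $1\to G_1\to C\to\widetilde G_3\to 1$, and deducing $\widetilde G_2\simeq\widetilde G_1\times_k\widetilde G_3$ compatibly with the given maps (the author explicitly credits B.~Conrad for this argument). In particular, in positive characteristic the statement you need is scheme-theoretic: you must rule out an infinitesimal discrepancy between $\krn\widetilde\kappa_{\bar s}$ and the image of $\widetilde G_{1,\bar s}$ (fundamental groups such as $\mu_p$ are non-smooth, so smoothness of this kernel is not automatic from classical, variety-level structure theory). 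So your proof, as written, defers the actual mathematical content of the proposition to an unproved assertion; to complete it you would have to supply an argument of the kind the paper gives for the field case, at which point your write-up and the paper's proof essentially coincide. Your assessment of where the difficulty lies is therefore inverted: the fiberwise reduction is the routine part, and the algebraically closed field case is the heart of the matter. (Minor point: the corollary you invoke for lifting homomorphisms is Corollary 2.6 in the paper's numbering.)
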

\begin{proof} By Remark 2.7, $1\ra\widetilde{ G }_{ 1}\ra \widetilde{ G }_{
2}\ra\widetilde{ G }_{ 3}\ra 1 $ is a complex and to prove its
exactness we need only check that the induced morphism $\widetilde{
G }_{ 2}/\widetilde{ G }_{ 1}\twoheadrightarrow\widetilde{G}_{ 3}$
(which is an isogeny for dimension reasons) is an isomorphism. This
can be checked on fibers over the base, so we are reduced to the
case $S=\spec k$, where $k$ is an algebraically closed field. Let
$C=G_{2}\times_{G_{3}}\Gtil_{3}$ and consider the exact sequence of
$k$-group schemes
\begin{equation}\label{c1}
1\ra G _{ 1}\ra C\ra\widetilde{G}_{ 3}\ra 1.
\end{equation}
The $k$-group $C$ is smooth and connected, since there exists a
smooth quotient map $C\twoheadrightarrow\Gtil_{3}$ whose kernel
$G_{1}$ is connected. Further, there exists a central extension
$$
1\ra\mu_{3}\ra C\ra G _{2}\ra 1.
$$
It follows that $C$ is connected and semisimple. Thus \eqref{c1} is
an exact sequence of smooth connected semisimple algebraic
$k$-groups. By the general structure theorem of such groups, $G_{1}$
embeds into $C$ as an almost direct factor. More precisely, let $ H
$ denote the smooth connected normal subgroup of $C$ generated by
the minimal nontrivial smooth connected normal subgroups of $C$ not
contained in $G_{1}$. Then $H$ is connected, semisimple and
centralizes $G_{ 1}$, and there exists a central isogeny $ G _{
1}\be\times_{k} H \twoheadrightarrow C$. In particular, $H
\twoheadrightarrow\Gtil_{ 3}$ is an isogeny (for dimension reasons
again) and has kernel $H\cap G _{ 1}$, which is central in $H $.
Therefore $ H \ra\Gtil_{3}$ is an isomorphism. That is, the
projection $C\twoheadrightarrow\widetilde{ G }_{ 3}$ has a section
which identifies $\widetilde{ G }_{3}$ with $ H $ and $ H $
centralizes $ G _{ 1}$, whence $ H \cap\, G _{1}=\lbrace 1\rbrace$.
In other words, \eqref{c1} splits. Thus $\widetilde{ G }_{
2}\simeq\widetilde{C}\simeq\widetilde{ G }_{ 1}\times_{k}\widetilde{
G }_{ 3}$, compatibly with the natural maps (compatibility can be
checked after composing with isogenies down to terms in $1\ra G
_{1}\ra G _{ 2}\ra G _{ 3}\ra 1$, since our groups are smooth and
connected).
\end{proof}

Now, if $G$ is any reductive $S$-group scheme, $\Gtil$ will denote
the simply-connected central cover of its derived group $G^{\der}$
(which is semisimple) and $\mu=\mu_{G}$ will denote the fundamental
group of $G^{\der}$. There exists a canonical central extension
$$
1\ra\mu\ra\Gtil\ra G^{\der}\ra 1.
$$
Further, if $\lambda\colon G\ra H$ is a homomorphism of reductive
$S$-group schemes, $\widetilde{\lambda}$ will denote the
homomorphism $\Gtil\ra\widetilde{H}$ induced by
$\lambda^{\lbe\der}\colon G^{\der}\ra H^{\der}$.

\begin{proposition} Let $S$ be a non-empty scheme and let $1\ra G _{1}\ra G _{2}
\overset{\lambda}\ra G _{3}\ra 1$ be an exact sequence of reductive
group schemes over $S$, where $G_{1}$ is an $S$-{\rm{torus}}. Then
the map $\widetilde{\lambda}\colon\widetilde{ G }_{2}\ra \widetilde{
G }_{3}$ is an isomorphism.
\end{proposition}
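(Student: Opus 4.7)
The plan is to show that $\lambda^{\der}\colon G_{2}^{\der}\to G_{3}^{\der}$ is a central isogeny, so that the composite $\widetilde{G}_{2}\to G_{2}^{\der}\to G_{3}^{\der}$ realizes $\widetilde{G}_{2}$ as a simply-connected central cover of $G_{3}^{\der}$; the conclusion will then follow from the universal property of the simply-connected central cover.

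First I would establish that $G_{1}\subset Z(G_{2})$. Since $G_{1}$ is of multiplicative type, its automorphism functor over $S$ is representable by an \'etale $S$-group scheme (rigidity of groups of multiplicative type, \cite{sga3}, IX). The conjugation action yields a morphism of pointed $S$-schemes $G_{2}\to\underline{\rm Aut}_{S}(G_{1})$; since the fibers of $G_{2}\to S$ are geometrically connected, this morphism factors through the identity section, whence $G_{1}\subset Z(G_{2})$.

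Next I would verify that $\lambda^{\der}$ is a central isogeny. For surjectivity, $G_{3}/\lambda(G_{2}^{\der})$ is a quotient of the torus $G_{2}^{\tor}=G_{2}/G_{2}^{\der}$, hence itself a torus; the surjection $G_{3}\twoheadrightarrow G_{3}/\lambda(G_{2}^{\der})$ therefore factors through $G_{3}^{\tor}=G_{3}/G_{3}^{\der}$, so $G_{3}^{\der}\subset\lambda(G_{2}^{\der})$, while the reverse inclusion is automatic since $G_{3}^{\tor}$ is commutative. The kernel $\krn\lambda^{\der}=G_{1}\cap G_{2}^{\der}$ is a closed subgroup of the torus $G_{1}$, hence of multiplicative type; by the centrality established above it sits inside $Z(G_{2})\cap G_{2}^{\der}\subset Z(G_{2}^{\der})$, which is a finite $S$-group scheme since $G_{2}^{\der}$ is semisimple. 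Hence $\lambda^{\der}$ is a central isogeny between semisimple $S$-group schemes.

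Finally, the composite $\lambda^{\der}\circ\varphi_{G_{2}^{\der}}\colon\widetilde{G}_{2}\to G_{3}^{\der}$ is a central isogeny from a simply-connected semisimple $S$-group scheme. The universal property of $(\widetilde{G}_{3},\varphi_{G_{3}^{\der}})$ produces a central isogeny $h\colon\widetilde{G}_{3}\to\widetilde{G}_{2}$ with $\lambda^{\der}\circ\varphi_{G_{2}^{\der}}\circ h=\varphi_{G_{3}^{\der}}$, while the same universal property applied to $\widetilde{G}_{2}$ (which is its own simply-connected cover) yields a central isogeny $\widetilde{G}_{2}\to\widetilde{G}_{3}$ sectioning $h$, so $h$ is an isomorphism. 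Composing the defining equation of $h$ with $h^{-1}$ on the right gives $\varphi_{G_{3}^{\der}}\circ h^{-1}=\lambda^{\der}\circ\varphi_{G_{2}^{\der}}$, so by the uniqueness clause in Corollary 2.6 one has $h^{-1}=\widetilde{\lambda}$, as desired. The main technical point is the centrality of $G_{1}$ in step one; the remainder is essentially bookkeeping with the universal property of the simply-connected central cover.
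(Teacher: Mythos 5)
Your argument is correct in outline, and it takes a genuinely different route from the paper. The paper first notes (via SGA3, IX.5.5) that $G_{1}$ is central in $G_{2}$, then forms the pullback $C=G_{2}\times_{G_{3}}\widetilde{G}_{3}$: the resulting central extension $1\ra G_{1}\ra C\ra\widetilde{G}_{3}\ra 1$ splits by Proposition 2.4, so $C\simeq\widetilde{G}_{3}\times_{S}G_{1}$ and hence $\widetilde{C}\simeq\widetilde{G}_{3}$, while the projection $C\ra G_{2}$ gives $C^{\der}\onto G_{2}^{\der}$ with central kernel and hence $\widetilde{G}_{2}\simeq\widetilde{C}$; composing yields $\widetilde{\lambda}$. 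You instead work directly with $\lambda^{\der}$, prove it is a central isogeny, and then run the universal property of the simply-connected central cover (twice) together with the uniqueness clause of Corollary 2.6. What the paper's detour buys is precisely that it reuses the splitting result (Proposition 2.4) and so never has to verify by hand that $\lambda^{\der}$ is faithfully flat with finite flat kernel of multiplicative type; what your route buys is a proof closer to the classical field case, which moreover makes it transparent, via uniqueness in Corollary 2.6, that the resulting isomorphism is $\widetilde{\lambda}$. (Your last step can even be shortened: once $\lambda^{\der}\circ\varphi_{G_{2}^{\der}}$ is a central isogeny from a simply-connected group, the pair $(\widetilde{G}_{2},\lambda^{\der}\circ\varphi_{G_{2}^{\der}})$ \emph{is} a simply-connected central cover of $G_{3}^{\der}$, and uniqueness up to unique isomorphism finishes the argument.)

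Two justifications need repair over a general base, though the conclusions are correct. First, ``$\krn\lambda^{\der}$ is a closed subgroup of the torus $G_{1}$, hence of multiplicative type'' is not a valid inference: over a non-reduced or mixed-characteristic base a closed subgroup scheme of a torus need not be flat, hence need not be of multiplicative type. You should first get flatness of $\lambda^{\der}$, e.g.\ by the fibral flatness criterion (on each geometric fiber $\lambda^{\der}$ is a surjection of smooth algebraic groups -- this is exactly your image argument, read over a field, where it is unproblematic), and then observe that $\krn\lambda^{\der}$ is the base change of the flat map $\lambda^{\der}$ along the unit section, hence flat; being also closed in the finite multiplicative-type group $Z(G_{2}^{\der})$, it is finite and of multiplicative type, and central. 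Second, the manipulation with $G_{3}/\lambda(G_{2}^{\der})$ presupposes that the image and the quotient are well-behaved; over a general base this should be read at the level of fppf sheaves (where only commutativity of the quotient sheaf, not that it is a torus, is needed to factor $G_{3}\ra G_{3}/\lambda(G_{2}^{\der})$ through $G_{3}^{\tor}$), or replaced by the fiberwise argument just mentioned. With these repairs your proof is complete and of comparable length to the paper's.
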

\begin{proof} Note that, since the $S$-torus $G_{1}$ is normal in $G_{2}$ and
$G_{2}$ has connected fibers, $G_{1}$ is central in $G_{2}$
\cite{sga3}, IX.5.5. Now let $C=G_{2}\times_{ G_{3}}\widetilde{ G
}_{3}$. Then $C$ is a reductive $S$-group scheme which fits into an
exact sequence
$$
1\ra G _{1}\ra C \ra\widetilde{ G }_{3}\ra 1.
$$
Since $G_{1}$ is central in $G_{2}$, the preceding sequence is in
fact a central extension. Thus, by Proposition 2.4, there exists an
isomorphism $C \simeq\widetilde{ G }_{3}\times_{S} G _{1}$ which
induces an isomorphism $\widetilde{ C }\overset{\sim}\ra\widetilde{
G }_{3}$. On the other hand, the first projection $C\ra G _{2}$
induces an epimorphism $C^{\der}\!\!\twoheadrightarrow G_{2}^{\der}$
with central kernel, which implies that there exists an isomorphism
$\widetilde{ G }_{2}\overset{\sim}\ra\widetilde{ C }$. The
composition $\widetilde{ G }_{2}\overset{\sim}\ra\widetilde{ C }
\overset{\sim}\ra\widetilde{ G }_{3}$ is the map
$\widetilde{\lambda}$.
\end{proof}

\begin{proposition} Let $S$ be a non-empty scheme and let $1\ra
G_{1}\ra G _{2}\ra G _{3}\ra 1$ be an exact sequence of reductive
group schemes over $S$. Then there exists an induced exact sequence
of simply-connected central covers
$$
1\ra\widetilde{G}_{1}\ra\widetilde{ G }_{2}\ra \widetilde{ G
}_{3}\ra 1.
$$
\end{proposition}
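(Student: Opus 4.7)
The plan is to factor the map $\lambda\colon G_{2}\ra G_{3}$ through an auxiliary reductive $S$-group scheme $C$ in such a way that each piece of the factorization falls under one of the two cases already handled, namely Proposition~2.9 (kernel an $S$-torus) or Proposition~2.8 (all terms semisimple).

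First observe that $G_{1}^{\der}$ is normal in $G_{2}$: the conjugation action of $G_{2}$ on its normal subgroup $G_{1}$ consists of $S$-group automorphisms of $G_{1}$, and any such automorphism preserves the derived subgroup by the functoriality of its formation \cite{sga3}, XXII, 6.2.1. Set $C:=G_{2}/G_{1}^{\der}$; by the general theory of quotients of reductive $S$-group schemes by normal closed subgroup schemes which are smooth with connected semisimple fibres \cite{sga3}, XXII, $C$ is representable by a reductive $S$-group scheme. Using $G_{2}/G_{1}=G_{3}$ and $G_{1}/G_{1}^{\der}=G_{1}^{\tor}$ one obtains two exact sequences of reductive $S$-group schemes
\begin{equation*}
1\ra G_{1}^{\der}\ra G_{2}\ra C\ra 1\qquad\text{and}\qquad 1\ra G_{1}^{\tor}\ra C\ra G_{3}\ra 1.
\end{equation*}

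Applied to the second sequence, Proposition~2.9 yields an isomorphism $\widetilde{C}\overset{\sim}{\ra}\widetilde{G}_{3}$. For the first sequence, pass to derived subgroups: since $G_{1}^{\der}\subset G_{2}^{\der}$, the image of $G_{2}^{\der}$ in $C$ is $G_{2}^{\der}/G_{1}^{\der}$, which is semisimple, while $C$ modulo this image is $G_{2}/G_{2}^{\der}=G_{2}^{\tor}$, a torus; hence the image coincides with $C^{\der}$. We therefore have an exact sequence of semisimple $S$-group schemes
\begin{equation*}
1\ra G_{1}^{\der}\ra G_{2}^{\der}\ra C^{\der}\ra 1,
\end{equation*}
and Proposition~2.8 gives exactness of the induced sequence $1\ra\widetilde{G_{1}^{\der}}\ra\widetilde{G_{2}^{\der}}\ra\widetilde{C^{\der}}\ra 1$. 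Using the definitional identifications $\widetilde{G}_{i}=\widetilde{G_{i}^{\der}}$ and $\widetilde{C}=\widetilde{C^{\der}}$, and composing with the isomorphism $\widetilde{C}\simeq\widetilde{G}_{3}$, we produce the desired exact sequence $1\ra\widetilde{G}_{1}\ra\widetilde{G}_{2}\ra\widetilde{G}_{3}\ra 1$; the middle map is $\widetilde{\lambda}$ by the compatibility of the tilde with compositions (Remark~2.7), applied to the semisimple maps $G_{2}^{\der}\ra C^{\der}\ra G_{3}^{\der}$ whose composition is $\lambda^{\der}$.

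The main technical point is verifying that $C=G_{2}/G_{1}^{\der}$ is indeed a reductive $S$-group scheme. This requires two ingredients: the normality of $G_{1}^{\der}$ in $G_{2}$, and the representability of the fppf quotient by a (necessarily affine) smooth $S$-group scheme with connected reductive fibres. Both are standard consequences of SGA~3 but must be invoked with some care in the scheme setting. Once they are in place, the argument is a short two-step reduction to Propositions~2.8 and 2.9.
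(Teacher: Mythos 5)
Your proposal is correct and follows essentially the same route as the paper: both introduce the intermediate quotient $G_{2}/G_{1}^{\der}$ (your $C$, the paper's $G_{3}^{\e\prime}$), apply Proposition~2.8 to the resulting exact sequence of derived (semisimple) groups, and apply Proposition~2.9 to the sequence with torus kernel $G_{1}^{\tor}$ to identify the simply-connected covers. The extra care you take with the normality of $G_{1}^{\der}$ and the representability of the quotient is a sound elaboration of points the paper leaves implicit.
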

\begin{proof} Let $ G _{3}^{\e\prime}= G _{2}/ G _{1}^{\der}$. Then there exist exact
sequences
$$
1\ra G _{1}^{\der}\ra G _{ 2}^{\der}\ra( G
_{3}^{\e\prime})^{\der}\ra 1
$$
and
$$
1\ra G _{1}^{\tor}\ra G _{3}^{\e\prime}\ra G _{3}\ra 1.
$$
By Proposition 2.8, the first exact sequence induces an exact
sequence $1\ra\widetilde{ G }_{1}\ra\widetilde{ G }_{
2}\ra\widetilde{ G }_{3}^{\e\prime}\ra 1$, and the previous lemma
shows that $\widetilde{ G }_{3}^{\e\prime}$ is isomorphic to
$\widetilde{ G }_{3}$. The result is now clear.
\end{proof}

\begin{corollary} Let $S$ be a non-empty scheme and let $1\ra
 G _{1}\ra G _{2}\ra G _{3}\ra 1$ be an exact sequence of
reductive $S$-group schemes. Then there exists an exact sequence of
$S$-group schemes of multiplicative type
$$
1\ra\mu_{1}\ra\mu_{2}\ra\mu_{3}\ra G _{1}^{\tor}\ra G _{
2}^{\tor}\ra G _{3}^{\tor}\ra 1.
$$
\end{corollary}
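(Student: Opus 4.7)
The plan is to realise the desired six-term sequence as the snake-lemma exact sequence attached to the commutative diagram of fppf sheaves
$$
\xymatrix{
1 \ar[r] & \widetilde{G}_1 \ar[r] \ar[d]_{\varphi_1} & \widetilde{G}_2 \ar[r] \ar[d]_{\varphi_2} & \widetilde{G}_3 \ar[r] \ar[d]_{\varphi_3} & 1 \\
1 \ar[r] & G_1 \ar[r] & G_2 \ar[r] & G_3 \ar[r] & 1,
}
$$
whose top row is furnished by Proposition 2.10, whose bottom row is the given exact sequence, and whose vertical arrows $\varphi_i\colon \widetilde{G}_i\to G_i^{\der}\hookrightarrow G_i$ commute with the horizontals by the functoriality of the simply-connected central cover (Corollary 2.6 and Remark 2.7). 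For each $i$, the kernel of $\varphi_i$ is the fundamental group $\mu_i$, which is central in $\widetilde{G}_i$, and the image of $\varphi_i$ is $G_i^{\der}$, which is normal in $G_i$ with quotient the $S$-torus $G_i^{\tor}$. The asserted sequence is therefore literally
$$
1\to\krn\varphi_1\to\krn\varphi_2\to\krn\varphi_3\overset{\delta}\to\cok\varphi_1\to\cok\varphi_2\to\cok\varphi_3\to 1,
$$
where cokernels are taken in the fppf topology.

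The connecting morphism $\delta\colon\mu_3\to G_1^{\tor}$ is defined in the usual way: for a local section $x$ of $\mu_3\subset\widetilde{G}_3$, the surjectivity of $\widetilde{G}_2\twoheadrightarrow\widetilde{G}_3$ allows us to choose a local lift $\tilde y$ of $x$ to $\widetilde{G}_2$; since $\varphi_3(x)=1$, the section $\varphi_2(\tilde y)$ of $G_2$ actually lies in $G_1$, and I set $\delta(x)$ to be its class in $G_1^{\tor}=G_1/G_1^{\der}$. Well-definedness is clear, because two lifts of $x$ differ by a section of $\widetilde{G}_1$ whose $\varphi_1$-image lies in $G_1^{\der}$; and $\delta$ is a homomorphism because its target is commutative.

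What remains is the verification of exactness at each of the six positions, a routine diagram chase at the level of fppf sheaves. The injectivity of $\mu_1\to\mu_2$, the exactness at $\mu_2$ and at $G_2^{\tor}$, and the surjectivity $G_2^{\tor}\twoheadrightarrow G_3^{\tor}$ follow directly from the exactness of the two rows. The exactness at $\mu_3$ amounts to adjusting a lift $\tilde y$ of $x\in\krn\delta$ by an appropriate section of $\widetilde{G}_1$ so that the modified lift falls in $\mu_2$; the exactness at $G_1^{\tor}$ amounts, conversely, to choosing a representative in $G_1\cap G_2^{\der}$ of a class killed in $G_2^{\tor}$, lifting it through $\varphi_2$ to a section of $\widetilde{G}_2$, and projecting to a section of $\mu_3$. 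The main obstacle is conceptual rather than technical: one must check that this non-abelian snake-lemma argument is legitimate, but this works precisely because the kernels $\mu_i$ are central in $\widetilde{G}_i$ and the cokernels $G_i^{\tor}$ are abelian, so all the required products and differences of lifts give well-defined sections after passage to the relevant quotients.
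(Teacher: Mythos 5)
Your argument is exactly the paper's: it applies the snake lemma to the commutative diagram whose top row is the exact sequence of simply-connected central covers from Proposition 2.10 and whose bottom row is the given sequence, with vertical maps $\widetilde{G}_i\to G_i$ having kernels $\mu_i$ and cokernels $G_i^{\tor}$. The paper merely asserts that the snake lemma ``is justified in this case,'' whereas you spell out why (central kernels, abelian cokernels); the substance is identical.
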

\begin{proof} This follows from the proposition by applying the snake lemma
(which is justified in this case) to the exact commutative diagram
$$
\xymatrix{1\ar[r] &\widetilde{ G }_{ 1}\ar[r]\ar[d] &\widetilde{ G
}_{2}\ar[r]
\ar[d] &\widetilde{ G }_{ 3}\ar[r]\ar[d] &1 \\
1\ar[r] & G _{1}\ar[r] & G _{2}\ar[r] & G _{3}\ar[r] &1.}
$$
\end{proof}

\begin{remark} The corollary yields an exact sequence
$$
1\ra G_{3}^{*}\ra G_{2}^{*}\ra G_{1}^{*}\ra \mu_{3}^{\lbe *}\ra
\mu_{2}^{\lbe *}\ra \mu_{1}^{\lbe *}\ra 1
$$
of \'etale sheaves on $S$. When $S=\spec k$ for a separably closed
field $k$, the preceding sequence together with \cite{san}, Lemma
6.9, (ii) and (iii), yields an exact sequence
$$
1\ra G_{3}^{*}(k)\ra G_{2}^{*}(k)\ra G_{1}^{*}(k)\ra\pic\, G_{3}\ra
\pic\, G_{2}\ra\pic\, G_{1}\ra 1
$$
which coincides with \cite{san}, (6.11.4), p.43.
\end{remark}

Let $G$ be a reductive $S$-group scheme and let
$\partial\colon\Gtil\ra G$ be the composition
$\Gtil\twoheadrightarrow G^{\der}\hookrightarrow G$. Thus we have an
exact sequence
\begin{equation}\label{seq3}
1\ra\mu\ra\Gtil\overset{\partial}\ra G\ra G^{\tor}\ra 1.
\end{equation}
There exists a canonical ``conjugation" action of $G$ on $\Gtil$
such that the two-term complex $(\Gtil\be\overset{\partial}\ra\be
G\e)$, where $\Gtil$ and $G$ are placed in degrees $-1$ and $0$,
respectively, is a (left) {\it quasi-abelian} crossed module on
$S_{\e\rm{fl}}$, in the sense of \cite{ga1}, Definition 3.2. See
\cite{br}, Example 1.9, p.28, and \cite{ga1}, Example 2.2(iii). Thus
$\partial$ induces a homomorphism $\partial_{Z}\colon
Z\big(\Gtil\,\big)\ra Z(G)$ and the embedding of crossed modules
$$
\big(Z\big(\Gtil\e\big)\!\overset{\be\partial_{\lbe Z}}
\longrightarrow\! Z(G)\big)\hookrightarrow
(\Gtil\!\overset{\partial}\longrightarrow\! G)
$$
is a quasi-isomorphism \cite{ga1}, Proposition 3.4. In particular,
\eqref{seq3} induces an exact sequence of $S$-group schemes of
multiplicative type
\begin{equation}\label{seq4}
1\ra\mu\ra Z\big(\Gtil\e\big)\overset{\partial_{Z}}\longrightarrow
Z(G) \ra G^{\tor}\ra 1.
\end{equation}

Finally, we will write $\rad(G\e)$ for the radical of $G$, i.e., the maximal torus of $Z(G)$ \cite{sga3}, XXII, definition 4.3.6.
By [op.cit.], Theorem 6.2.1(iii), $\rad(G\e)$ is isogenous to ${\rm{corad}(G)}=G^{\tor}$.

\section{Flasque resolutions}

In this Section we extend J.-L.Colliot-Th\'el\`ene's
construction of flasque resolutions of (connected) reductive algebraic groups over a field contained in \cite{ct} to an
arbitrary admissible base scheme. Some of the proofs from [op.cit.] carry over to this more general setting {\it mutatis mutandis},
while others require substantial modifications.

\smallskip

Let $S$ be an admissible scheme (see Definition 2.1).

\begin{definition} A reductive $S$-group scheme $H$ is called \textit{quasi-trivial} if $H^{\lbe\der}$ is a
simply-connected $S$-group scheme and $H^{\lbe\tor}$ is a
quasi-trivial $S$-torus.
\end{definition}

Thus, by \eqref{seq1}, a quasi-trivial $S$-group scheme is an
extension of a quasi-trivial $S$-torus by a simply-connected
$S$-group scheme. Further, if $H$ is a quasi-trivial $S$-group and
$T\ra S$ is a morphism of admissible schemes, then $H\times_{S}T$ is
a quasi-trivial $T$-group scheme.

\begin{proposition} Let $G$ be a reductive $S$-group scheme
over an admissible scheme $S$. Then there exist a quasi-trivial
$S$-group scheme $H$, a flasque $S$-torus $F$ and a central
extension
$$
1\ra F\ra H \ra G \ra 1.
$$
\end{proposition}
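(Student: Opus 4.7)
The plan is to adapt Colliot-Th\'el\`ene's original construction over a field \cite{ct} to the admissible-scheme setting in three stages.

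\emph{Stage 1: a combined flasque resolution.} Since $S$ is admissible, both the $S$-torus $G^{\tor}$ and the finite multiplicative-type group $\mu:=\mu_{G}$ are isotrivial (\cite{sga3}, X, Theorem~5.16), so I can pick a single finite connected Galois cover $S^{\prime}\to S$ with group $\varDelta$ splitting both. Combining the flasque resolution of $\varDelta$-lattices from Colliot-Th\'el\`ene--Sansuc \cite{cts2} with its extension to finitely generated $\varDelta$-modules, one produces a four-term exact sequence of $\varDelta$-modules
$$0\to (G^{\tor})^{*}(S^{\prime})\to P^{*}\to F^{*}\to\mu^{*}(S^{\prime})\to 0$$
with $P^{*}$ a permutation $\varDelta$-module and $F^{*}$ a flasque $\varDelta$-lattice. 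Via the anti-equivalence of Section~2 this corresponds to an exact sequence of $S$-groups of multiplicative type
$$1\to\mu\to F\to P\to G^{\tor}\to 1$$
in which $P$ is a quasi-trivial $S$-torus and $F$ is a flasque $S$-torus containing $\mu$; set $F_{0}:=F/\mu$, the flasque kernel of $P\twoheadrightarrow G^{\tor}$.

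\emph{Stage 2: pullback of $G$.} Form $G_{1}:=G\times_{G^{\tor}}P$. Then $G_{1}$ is a reductive $S$-group scheme with $(G_{1})^{\der}=G^{\der}$ (since $G_{1}/G^{\der}\simeq P$ is a torus) and $(G_{1})^{\tor}=P$, and the second projection gives a central extension
$$1\to F_{0}\to G_{1}\to G\to 1,$$
central because the normal $S$-torus $F_{0}$ of the connected reductive $S$-group $G_{1}$ is central by \cite{sga3}, IX.5.5. Proposition~2.10 and Corollary~2.11 applied to this sequence give $\widetilde{G}_{1}\simeq\widetilde{G}$ and $\mu_{G_{1}}\simeq\mu$.

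\emph{Stage 3: lift to the simply-connected cover.} It remains to replace the derived group $G^{\der}$ of $G_{1}$ by $\widetilde{G}$. I would construct a central extension
$$1\to\mu\to H\to G_{1}\to 1$$
whose restriction to derived groups is the central isogeny $\widetilde{G}\twoheadrightarrow G^{\der}$. Concretely, I use Corollary~2.6 to lift the conjugation action of $G_{1}$ on its normal subgroup $G^{\der}$ uniquely to an action on $\widetilde{G}$ (which fixes $\mu\subset Z(\widetilde{G})$ pointwise), and define $H$ as the fppf-sheaf quotient of $\widetilde{G}\rtimes G_{1}$ by the antidiagonal copy of $G^{\der}$ furnished by $\varphi\colon\widetilde{G}\twoheadrightarrow G^{\der}\hookrightarrow G_{1}$. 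The resulting $H$ is a reductive $S$-group scheme with $H^{\der}=\widetilde{G}$ simply-connected and $H^{\tor}=P$ quasi-trivial, and is therefore quasi-trivial in the sense of Definition~3.1. Composing with $G_{1}\to G$ yields the desired central extension
$$1\to F\to H\to G\to 1,$$
where $F$ is the flasque $S$-torus of Stage~1, central in $H$ because both $\mu\subset Z(H)$ and $F_{0}\subset Z(G_{1})$.

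The hardest step will be Stage~3: verifying that the fppf-sheaf quotient defining $H$ is represented by an honest reductive $S$-group scheme and that $F$ is truly central in it. Flasqueness of $F$ is ensured by the combined four-term resolution in Stage~1, which is the crucial enhancement over the single flasque resolution of $G^{\tor}$ that sufficed in the field case.
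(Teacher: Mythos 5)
Your Stage 3 is where the construction breaks, and it is the heart of the matter. The map furnished by $\varphi\colon\widetilde{G}\twoheadrightarrow G^{\mathrm{der}}\hookrightarrow G_{1}$ embeds $\widetilde{G}$ (not $G^{\mathrm{der}}$) into $\widetilde{G}\rtimes G_{1}$, via $x\mapsto(x^{-1},\varphi(x))$; there is no ``antidiagonal copy of $G^{\mathrm{der}}$'' in that semidirect product unless the central isogeny $\varphi$ admits a homomorphic section. Worse, this antidiagonal $\widetilde{G}$ is exactly the kernel of the homomorphism $\widetilde{G}\rtimes G_{1}\to G_{1}$, $(x,g)\mapsto\varphi(x)g$, so your fppf quotient is canonically isomorphic to $G_{1}$ itself: the central $\mu$ you wanted to insert is killed and the resulting group has derived group $G^{\mathrm{der}}$, not $\widetilde{G}$. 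Even the existence of \emph{some} central extension $1\to\mu\to H\to G_{1}\to 1$ with $H^{\mathrm{der}}\simeq\widetilde{G}$ is an obstructed lifting problem and would require a separate argument, which you do not give.

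Even granting a repaired Stage 3, there is a second, independent gap: the kernel of the composite $H\to G$ is some central multiplicative-type extension $1\to\mu\to K\to F_{0}\to 1$, and nothing identifies $K$ with the torus $F$ you fixed in Stage 1. The class of $K$ as an extension of $F_{0}$ by $\mu$ is dictated by $G$ itself, ultimately by the canonical four-term sequence $1\to\mu\to Z(\widetilde{G}\,)\to Z(G)\to G^{\mathrm{tor}}\to 1$ of Section 2 (Proposition 3.4 shows that the four-term sequence coming from any genuine flasque resolution is quasi-isomorphic to it), whereas your Stage 1 chooses $F$ using only the pair $(\mu,G^{\mathrm{tor}})$ and ignores how $G^{\mathrm{der}}$ and $G^{\mathrm{tor}}$ are glued; in general $K$ need not be flasque, nor even a torus (it could be $\mu\times_{S}F_{0}$). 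This is precisely why the paper, following Colliot-Th\'el\`ene, proceeds in the opposite order: it first presents $G$ as an fppf quotient of $R\times_{S}\widetilde{G}$ (with $R$ quasi-trivial mapping onto $\mathrm{rad}(G)$) with a concrete central multiplicative-type kernel $M$, and only then chooses a flasque resolution $1\to M\to F\to P\to 1$ of that particular $M$, defining $H$ as the pushout --- a quotient by a \emph{central multiplicative-type} subgroup, for which representability and reductivity are supplied by SGA3, XXII, 4.3.2, a much easier fact than your quotient by a normal copy of $\widetilde{G}$. Finally, your Stage 1 is itself not justified as stated: splicing a permutation resolution of $(G^{\mathrm{tor}})^{*}$ with a flasque resolution of $\mu^{*}$ does not obviously yield a four-term sequence with permutation second term and flasque third term simultaneously; but this is secondary to the two points above.
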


\begin{proof} The following proof is a straightforward generalization of
\cite{ct}, proof of Proposition-Definition 3.1.

By \cite{sga3}, XXII, Theorem 6.2.1, $K:=\rad(G\e)\cap
G^{\der}\subseteq G $ is a finite $S$-group scheme of multiplicative
type contained in $Z(G^{\der})$ ($K$ agrees with the kernel of the
isogeny $\rad( G \e)\ra\rm{corad}( G \e)$ of [loc.cit.],
6.2.1(iii)). Now, by [op.cit.], XXII, 6.2.3, the product in $G$ defines a
faithfully flat morphism $\rad(G\e)\times_{S}\e G^{\der}\ra G$ whose
kernel is equal to the image of $\iota := (1_{\be K },
{\rm{inv}}_{\be K })_{S}\colon K \hookrightarrow K \times_{S} K
\subseteq \rad( G \e)\times_{S} G ^{\der}$, where $1_{\be K }$ and
${\rm{inv}}_{\be K }\colon K\ra K$ are, respectively, the identity
and the inversion morphism on $K$. The map $\rad( G \e)\times_{S} G^{\der}\ra G $ induces a faithfully flat morphism $\rad( G
\e)\times_{S}\widetilde{ G }\ra G $ whose kernel will be denoted by
$\mu_{1}$. Thus, there exists an exact commutative diagram
\begin{equation}
\xymatrix{ 1\ar[r] & \mu_{1}\ar[r]\ar@{->>}[d] & \rad( G
\e)\times_{S}
\widetilde{ G }\ar[r]\ar@{->>}[d] &  G \ar[r]\ar@{=}[d] & 1\\
1\ar[r] & \iota(K)\ar[r] & \rad( G \e)\times_{S} G ^{\der}\ar[r] &
G \ar[r] & 1. } \label{eq_ast}
\end{equation}
The kernel of the middle vertical map above, which is the same as the
kernel of the left-hand vertical map, is canonically isomorphic to
the fundamental group $\mu$ of $ G $. It follows that $\mu_{1}$ is a
finite $S$-group scheme of multiplicative type contained in $\rad( G
\e)\times_{S}\big(\widetilde{ G }\times_{G^{\lbe\der}}\be K\big)$.

Since
$K$ is central in $G^{\der}$ and $\widetilde{ G }\times_{ G ^{\der}}Z( G ^{\der})=Z\big(\widetilde{ G }\e\big)$, $\mu_{1}$ is contained in $\rad( G \e)\times_{S}Z\big(\widetilde{ G
}\e\big)= Z\big(\rad( G \e)\times_{S}\widetilde{ G }\e\big)$. Thus
the rows of \eqref{eq_ast} are {\it central} extensions. Now the
projection $\rad( G \e)\times_{S} Z(\widetilde{ G }\,)\ra
Z(\widetilde{ G }\,)$ yields an embedding $\mu_{1}\hookrightarrow
Z(\widetilde{ G }\,)$. On the other hand, $\rad(G\e)$ is an isotrivial $S$-torus (since $S$ is admissible), so there exists a quasi-trivial $S$-torus $R$ and a surjective
homomorphism $R\twoheadrightarrow\rad(G\e)$ by \cite{cts2}, (1.3.3). Therefore there
exists a faithfully flat homomorphism $R\times_{S}\widetilde{ G }\twoheadrightarrow  G
$ with kernel $M$ (say) and an exact commutative diagram
$$
\xymatrix{ 1\ar[r] & M\ar[r]^(.4){i}\ar@{->>}[d] &R\times_{S}\Gtil
\ar[r]\ar@{->>}[d] & G \ar[r]\ar@{=}[d] &1\\
1\ar[r] &\mu_{1} \ar[r] &\rad( G \e)\times_{S}\widetilde{ G }\ar[r]
& G \ar[r] &1. }
$$
As in \cite{ct}, p.89, lines 2-5, $M$ is an (isotrivial) $S$-group scheme of finite type and of multiplicative type contained in the center of $\e
R\times_{S}\widetilde{ G }$. Next, by \cite{cts2}, (1.3.2), we can
find an exact sequence $1\ra M  \overset{j}\ra F\ra P\ra 1$, where
$F$ is a flasque $S$-torus and $P$ is a quasi-trivial $S$-torus. Let
$H$ be the pushout of $i\colon M \hookrightarrow
R\times_{S}\widetilde{G}$ and $j^{\e\prime}:= {\rm{inv}}_{\lbe F
}\be\circ j\colon M\hookrightarrow F$, i.e., the cokernel of the
central embedding $(i,j)_{S}: M
\hookrightarrow(R\times_{S}\widetilde{G})\times_{S} F$. Then $H$ is a reductive $S$-group scheme
\cite{sga3}, XXII, Corollary 4.3.2, which fits into an exact
commutative diagram
$$
\xymatrix{  & 1\ar[d]&1\ar[d] & &\\
1\ar[r] &  M  \ar[r]^(.4){i} \ar[d]^{j^{\e\prime}} &
R\times_{S}\Gtil
\ar[r]\ar[d] &  G \ar[r]\ar@{=}[d] & 1\\
1\ar[r] & F \ar[r]\ar[d] &  H \ar[r]\ar[d] &  G \ar[r] & 1\\
& P \ar[d]\ar@{=}[r]&P\ar[d] & &\\
& 1&1 & &}
$$
with $F$ central in $ H $. Let $\varepsilon\colon S\ra R$ be the
unit section of $R$ and let $\psi=(\varepsilon,
{\text{Id}}_{\lbe\widetilde{ G }})_{S}\colon S\times_{S}\widetilde{
G }\ra R\times_{S}\widetilde{ G }$. Then there exists an
$S$-morphism
$$
\widetilde{ G }\simeq S\times_{S}\widetilde{ G
}\overset{\psi}\hookrightarrow R\times_{S}\widetilde{ G
}\hookrightarrow H
$$
which embeds $\widetilde{ G }$ as a normal subgroup scheme of $ H $.
The corresponding quotient $ H /\e\widetilde{ G }$ fits into an
exact commutative diagram
$$
\xymatrix{&& 1\ar[d]&1\ar[d] &\\
1\ar[r] &\widetilde{ G }\ar[r]\ar@{=}[d] &R\times_{S}\widetilde{ G }\ar[r]\ar[d] & R\ar[r]\ar[d] &1\\
1\ar[r] &\widetilde{ G }\ar[r] & H \ar[r]\ar[d] & H /\,\widetilde{ G }\ar[r]\ar[d] &1\\
& &P \ar[d]\ar@{=}[r]& P\ar[d] & \\
 && 1&1 & }
$$
By \cite{sga3}, XXII, 6.3.1 and 6.2.1(iv), $H^{\der}\simeq
(\Gtil\,)^{\der}=\widetilde{ G }$ is simply-connected. Further, $ H
/\e\widetilde{ G }\simeq H ^{\tor}$ is an extension of the
quasi-trivial $S$-torus $P$ by the quasi-trivial $S$-torus $R $. Now Lemma 2.2 shows that
$$
H^{\tor}\simeq R\times_{S} P
$$
is a quasi-trivial $S$-torus, which completes the proof.
\end{proof}

\begin{remarks}\indent
\begin{enumerate}

\item[(a)] Let $S^{\e\prime}\ra S$ be a finite connected
Galois cover of $S$ with Galois group $\varDelta$ which splits both
${\rm{rad}}(G)$ (or, equivalently, ${\rm{corad}}(G)=G^{\tor}$) and
$Z\big(\Gtil\big)$. Then the proof of the proposition shows that
there exists a flasque resolution $1\ra F\ra H \ra G \ra 1$ of $G$
such that $F$ is split by $S^{\e\prime}\ra S$. Consequently, if
$\varDelta$ is {\it metacyclic} (i.e., if every Sylow subgroup of
$\varDelta$ is cyclic), then $F$ is {\it invertible} \cite{cts1},
Proposition 2, p.184, and we conclude that $G$ admits an invertible
resolution.

\item[(b)] Let $G$ and $S$ be as in the statement of the
proposition, let $1\ra F\ra H \ra G \ra 1$ be a flasque resolution of $G$ and let $T\ra S$ be a morphism of admissible schemes. Then $1\ra F\times_{S}T\ra H\times_{S}T \ra G\times_{S}T \ra 1$ is a flasque resolution of the reductive $T$-group scheme $G\times_{S}T$. See \cite{cts2}, Proposition 1.4.
\end{enumerate}

\end{remarks}

Let $S$ and $G$ be as in the proposition and let
\begin{equation}\label{fr}
1\ra F\ra H \ra G \ra 1
\end{equation}
be a flasque resolution of $G$. Set $R= H^{\tor}$. Then the proof of
the proposition (see also \cite{ct}, p.93) shows that \eqref{fr}
induces a so-called ``fundamental" exact commutative diagram
\begin{equation}\label{fund}
\xymatrix{ &1\ar[d] &1\ar[d] &1\ar[d] &\\
1\ar[r] &\mu\ar[r]\ar[d] &\widetilde{ G }\ar[r]\ar[d] & G ^{\der}\ar[r]\ar[d] &1\\
1\ar[r] & F\ar[r]\ar[d] & H \ar[r]\ar[d] & G \ar[r]\ar[d] &1\\
1\ar[r] &M\ar[r]\ar[d] &R\ar[r]\ar[d] & G^{\tor}\ar[r]\ar[d] &1,\\
&1 &1 &1 & }
\end{equation}
where $M=F/\mu$. The above diagram immediately yields an exact
sequence
\begin{equation}\label{sec}
1\ra\mu\ra F\ra R\ra G^{\tor}\ra 1.
\end{equation}
Further, \eqref{fund} induces an exact commutative diagram of
$S$-group schemes of multiplicative type
\begin{equation}\label{zfund}
\xymatrix{ &1\ar[d] &1\ar[d] &1\ar[d] &\\
1\ar[r] &\mu\ar[r]\ar[d] &Z\big(\widetilde{ G }\e\big)
\ar@{-->}[rd]^{\partial_{Z}}\ar[r]\ar[d] & Z(G^{\der})\ar[r]\ar[d] &1\\
1\ar[r] & F\ar@{-->}[rd]\ar[r]\ar[d] & H\be\times_{G}\be Z(G)
\ar[r]\ar[d] &
Z(G) \ar[r]\ar[d] &1\\
1\ar[r] & M\ar[r]\ar[d] & R\ar[r]\ar[d] & G ^{\tor}\ar[r]\ar[d] &1.\\
&1 &1 &1 & }
\end{equation}

\begin{proposition} The complexes
$\big(Z\big(\Gtil\e\big)\!\be\overset{\partial_{Z}}
\longrightarrow\! Z(G)\e\big)$ and $(F\be\ra\be R\e)$ are
canonically quasi-isomorphic.
\end{proposition}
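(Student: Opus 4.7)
The plan is to exhibit both complexes as naturally quasi-isomorphic to a common intermediate two-term complex $D^{\bullet}$ assembled from the middle row and middle column of \eqref{zfund}; the composition of one quasi-isomorphism with the inverse of the other in the derived category of fppf sheaves on $S$ will then yield the desired canonical quasi-isomorphism.

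Place $H\be\times_{G}\be Z(G)$ in degree $-1$ and $Z(G)\times_{S} R$ in degree $0$, and set
\[
D^{\bullet}\;:=\;\bigl[H\be\times_{G}\be Z(G)\xrightarrow{(\pi_{Z(G)},\,\pi_{R})}Z(G)\times_{S} R\bigr],
\]
where $\pi_{Z(G)}$ and $\pi_{R}$ denote the quotient maps appearing in the middle row and middle column of \eqref{zfund}, respectively. I then construct natural chain maps
\[
\alpha\colon\bigl(Z(\widetilde{G})\xrightarrow{\partial_{Z}}Z(G)\bigr)\lra D^{\bullet}\qquad\text{and}\qquad\beta\colon\bigl(F\to R\bigr)\lra D^{\bullet}
\]
as follows: $\alpha$ uses the inclusion $Z(\widetilde{G})\hookrightarrow H\be\times_{G}\be Z(G)$ (from the middle column of \eqref{zfund}) together with $Z(G)\hookrightarrow Z(G)\times_{S}R$, $z\mapsto(z,1)$, and its commutativity follows from the fact that $Z(\widetilde{G})\subset\widetilde{G}=H^{\der}$ maps trivially to $R=H^{\tor}$; $\beta$ uses the inclusion $F\hookrightarrow H\be\times_{G}\be Z(G)$ (from the middle row) together with $R\hookrightarrow Z(G)\times_{S}R$, $r\mapsto(1,r)$, and its commutativity follows from $F\subset\ker(H\twoheadrightarrow G)$.

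The main task is to verify that $\alpha$ and $\beta$ are quasi-isomorphisms. In degree $-1$, the kernel of $(\pi_{Z(G)},\pi_{R})$ equals $F\cap Z(\widetilde{G})$ inside $H\be\times_{G}\be Z(G)$; combining \eqref{fund} with \eqref{seq4} shows that the two natural embeddings of $\mu$ (as $\ker(\widetilde{G}\to G^{\der})$ and as $\ker(F\to M)$) pick out the same sub-$S$-group scheme of $H$, so $F\cap Z(\widetilde{G})=\mu$, and both $\alpha,\beta$ act as the identity on $H^{-1}=\mu$. In degree $0$, commutativity of the bottom-right square of \eqref{zfund} forces the image of $(\pi_{Z(G)},\pi_{R})$ to coincide with $Z(G)\times_{G^{\tor}}R$, whence $H^{0}(D^{\bullet})\cong G^{\tor}$; under this identification the induced maps on $H^{0}$ from $\alpha$ and $\beta$ agree, up to an inessential overall sign, with the canonical quotients $Z(G)\twoheadrightarrow G^{\tor}$ (from \eqref{seq4}) and $R\twoheadrightarrow G^{\tor}$ (from \eqref{sec}).

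The principal technical obstacle will be the identification $F\cap Z(\widetilde{G})=\mu$ as sub-$S$-group schemes of $H$: this requires inspecting the explicit pushout construction of $H$ in the proof of Proposition $3.2$ (the embedding $M\hookrightarrow(R\times_{S}\widetilde{G})\times_{S}F$) to see that the two natural copies of $\mu$ inside $H$ coincide. Once this is in hand, the quasi-isomorphism claim follows from the exactness of the rows and columns of \eqref{zfund} by elementary diagram chases.
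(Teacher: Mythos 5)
Your proof is correct and takes essentially the same route as the paper: both complexes are embedded quasi-isomorphically into the common complex $\big(H\times_{G}Z(G)\to Z(G)\oplus R\big)$ built from the middle row and middle column of \eqref{zfund}, with $H^{-1}=\mu$ and $H^{0}=G^{\tor}$ in all three cases. The one step you single out as the principal obstacle, namely $F\cap Z\big(\widetilde{G}\big)=\mu$ inside $H\times_{G}Z(G)$, does not require revisiting the pushout construction of $H$: it is a short diagram chase from the exactness of \eqref{zfund}, which the paper has already asserted.
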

\begin{proof} The proof is formally the same as the proof of \cite{ct},
Proposition A.1, p.124. Indeed, \eqref{zfund} induces an exact
commutative diagram
$$
\xymatrix{1\ar[r] &\mu\ar[r]\ar[d]^{=} & Z\big(\Gtil\e\big)
\ar[r]\ar@{^{(}->}[d] & Z(G)\ar[r]\ar@{^{(}->}[d]&G^{\tor}\ar[d]^{=}\ar[r] &1\\
1\ar[r] & \mu\ar[r] & H\be\times_{G}\be Z(G) \ar[r]& R\oplus Z(G) \ar[r]&G^{\tor}\ar[r] &1\\
1\ar[r] & \mu\ar[r]\ar[u]_{=} & F^{\phantom{i}}\ar[r]\ar@{_{(}->}[u]
& R^{\phantom{i}}\ar[r]\ar@{_{(}->}[u]& G^{\tor}\ar[r]\ar[u]_{=}
&1,}
$$
where the top row is \eqref{seq4} and the bottom row is \eqref{sec}.
It follows that the complexes in the statement are both
quasi-isomorphic to the complex $(H\be\times_{G}\be Z(G)\ra R\oplus
Z(G))$. For more details, see [op.cit.], p.126.
\end{proof}

\smallskip

We now turn to the problem of comparing two flasque resolutions of a
given reductive group scheme $G$. We begin with

\begin{lemma} Let $S$ be an admissible scheme. Then
every central extension of a quasi-trivial $S$-group scheme by a
flasque $S$-torus is split.
\end{lemma}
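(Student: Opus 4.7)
The plan is to reduce the statement to the two special cases already established: Proposition~2.4, which splits every central extension of a simply-connected $S$-group scheme by an $S$-group scheme of multiplicative type, and Lemma~2.2, which splits every extension of a quasi-trivial $S$-torus by a flasque $S$-torus. Write the given central extension as $1\to F\to E\to H\to 1$. The middle term $E$ is an extension of the reductive $S$-group scheme $H$ by the $S$-torus $F$ and is therefore itself reductive (by \cite{sga3}, XVII, Proposition~7.1.1, exactly as in the proof of Lemma~2.2), so the derived subgroup $E^{\der}$ and the toric quotient $E^{\tor}$ are well defined.

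First I would pull the extension back along $H^{\der}\hookrightarrow H$ to obtain a central extension $1\to F\to E_{1}\to H^{\der}\to 1$. Since $H$ is quasi-trivial, $H^{\der}$ is simply-connected, so Proposition~2.4 yields a homomorphic section $s\colon H^{\der}\to E_{1}\subseteq E$. I note in passing that $s$ is the unique such section, because any two would differ by a homomorphism from the (simply-connected) semisimple $S$-group $H^{\der}$ to the commutative group $F$, and no such nonzero homomorphism exists.

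The crux of the proof is to identify $s(H^{\der})$ with $E^{\der}$, which in particular makes it normal in $E$. The splitting of the previous step realizes $E_{1}$ as a direct product $F\times_{S}s(H^{\der})$ (the product is direct because $F$ is central in $E$), whose derived subgroup is exactly $s(H^{\der})$. Since $E/E_{1}\simeq H^{\tor}$ is a torus, $E^{\der}\subseteq E_{1}$; passing to derived subgroups then gives $E^{\der}=(E^{\der})^{\der}\subseteq E_{1}^{\der}=s(H^{\der})$, while the reverse inclusion is immediate because $s(H^{\der})$ is connected and semisimple. This identification is the main obstacle, since it is what converts the problem of splitting a possibly nonabelian central extension into a question about the abelian quotient.

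With $E^{\der}=s(H^{\der})$ normal in $E$, the quotient $E/E^{\der}=E^{\tor}$ is an $S$-torus fitting into an exact sequence $1\to F\to E^{\tor}\to H^{\tor}\to 1$ with $F$ flasque and $H^{\tor}$ quasi-trivial, which is split by Lemma~2.2. The resulting retraction $E^{\tor}\to F$ composes with $E\to E^{\tor}$ to give a homomorphism $R\colon E\to F$ with $R|_{F}=\id_{F}$. Centrality of $F$ in $E$ makes the map $e\mapsto e\cdot R(e)^{-1}$ into a homomorphism $E\to E$ that vanishes on $F$, so it factors through $E/F=H$ to give the desired splitting $\psi\colon H\to E$ of the original extension.
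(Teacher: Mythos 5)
Your proof is correct and follows essentially the same route as the paper: pull back along $H^{\der}\hookrightarrow H$ and use Proposition~2.4 (via Remark~2.5) to identify $E^{\der}$ with $H^{\der}$, then apply Lemma~2.2 to the resulting sequence $1\to F\to E^{\tor}\to H^{\tor}\to 1$. The only difference is cosmetic: the paper packages the last step as an isomorphism $E\simeq E^{\lbe\tor}\times_{H^{\be\tor}}H$, whereas you build the splitting explicitly from the retraction $E^{\tor}\to F$.
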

\begin{proof} Let $H$ be a quasi-trivial $S$-group scheme, let $F$ be a
flasque $S$-torus and let $1\ra F\ra E\ra H\ra 1$ be a central
extension. Applying Remark 2.5 to the central extension $1\ra F\ra
E\times_{H}H^{\der}\ra H^{\der}\ra 1$, we conclude that the
inclusion $H^{\der}\hookrightarrow H$ induces a morphism
$H^{\der}\ra E$ which maps $H^{\der}$ isomorphically onto
$E^{\der}$. It follows that there exists an exact commutative
diagram
$$
\xymatrix{1\ar[r]&F\ar@{=}[d]\ar[r]&E\ar[d]\ar[r]&H
\ar[d]^{c}\ar[r]&1\\
1\ar[r]& F\ar[r]& E^{\tor}\ar[r]& H^{\tor}\ar[r]&1,}
$$
where $c$ is the canonical map. The above diagram induces another  exact commutative diagram
$$
\xymatrix{1\ar[r]&F\ar@{=}[d]\ar[r]&E\ar[d]\ar[r]&H
\ar@{=}[d]\ar[r]&1\\
1\ar[r]& F\ar[r]& E^{\lbe\tor}\times_{H^{\be\tor}}H\ar[r]& H\ar[r]&1,}
$$
which shows that $E\simeq E^{\lbe\tor}\times_{H^{\be\tor}}H$. Since $E^{\lbe\tor}\simeq F\times_{S}H^{\lbe\tor}$ by Lemma 2.2, the result follows.
\end{proof}

We can now compare two flasque resolutions of a given reductive
$S$-group scheme $G$.

\begin{proposition} Let $S$ be an admissible scheme and let
$ G $ be a reductive group scheme over $S$. Let $1\ra F\ra H \ra G
\ra 1$ and $1\ra F_{1}\ra H _{1}\ra G \ra 1$ be two flasque
resolutions of $G $. Set $R= H ^{\tor}$ and $R_{1}=H^{\tor}_{1}$.
Then
\begin{enumerate}
\item[(i)] There exists an isomorphism
$ F \times_{S} H _{1}\simeq  F _{1}\times_{S} H $.
\item[(ii)] The simply-connected $S$-group schemes
$H^{\der}$ and $H^{\der}_{1}$ are isomorphic.
\item[(iii)] There exists an isomorphism of $S$-tori
$F\times_{S} R_{1}\simeq  F_{1}\times_{S} R$.
\item[(iv)] There exists a canonical isomorphism of
twisted-constant $S$-group schemes
$$
\cok[F_{*}\ra R_{*}]\simeq\cok[F_{1*}\ra R_{1*}].
$$
\end{enumerate}
\end{proposition}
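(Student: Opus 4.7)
The key object for (i)--(iii) is the fiber product $E := H \times_{G} H_1$. Since both $H \to G$ and $H_1 \to G$ are smooth surjective with flasque-torus kernels, $E$ is a reductive $S$-group scheme (it is an extension of the reductive $H_1$ by the torus $F$; see \cite{sga3}, XVII, Proposition 7.1.1) fitting into two exact sequences
\[
1 \to F \to E \to H_1 \to 1, \qquad 1 \to F_1 \to E \to H \to 1,
\]
in each of which the normal torus kernel is central in the connected reductive $E$ by \cite{sga3}, IX.5.5. Since $H$ and $H_1$ are quasi-trivial and $F, F_1$ are flasque, Lemma 3.5 splits both central extensions, producing isomorphisms
\[
F \times_S H_1 \;\simeq\; E \;\simeq\; F_1 \times_S H,
\]
which is exactly the assertion of (i).

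Parts (ii) and (iii) then follow by applying the derived-subgroup and coradical functors to the isomorphism of (i). Because $F$ and $F_1$ are $S$-tori, $(F\times_S H_1)^{\der}=H_1^{\der}$ and $(F_1\times_S H)^{\der}=H^{\der}$, so these two simply-connected $S$-group schemes are isomorphic, giving (ii). The analogous computation of $E^{\tor}$ under the two descriptions gives $F\times_S R_1 \simeq E^{\tor}\simeq F_1\times_S R$, which is (iii).

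For (iv) I would appeal to Proposition 3.4 applied to each of the two resolutions: both two-term complexes $(F\to R)$ and $(F_1\to R_1)$, placed in degrees $-1$ and $0$, are canonically quasi-isomorphic to the same complex $(Z(\Gtil)\to Z(G))$, which depends only on $G$. Composing yields a canonical quasi-isomorphism $(F\to R)\simeq(F_1\to R_1)$ of complexes of fppf sheaves of abelian groups. Applying the cocharacter functor $T\mapsto T_*$, which is exact on $S$-tori, and reading off $H^{\e 0}$ of the resulting complex of twisted-constant $S$-group schemes, yields the required canonical isomorphism $\cok[F_*\to R_*]\simeq\cok[F_{1*}\to R_{1*}]$.

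The subtle point, and the main obstacle, is precisely this last step in (iv): the intermediate complex $(Z(\Gtil)\to Z(G))$ has multiplicative-type (not purely toral) entries, since $Z(\Gtil)$ contains the finite subgroup $\mu$, so $(-)_*$ cannot be applied to it degreewise. The way around this is to work instead with the explicit intermediate complex $(H\times_G Z(G)\to R\oplus Z(G))$ constructed in the proof of Proposition 3.4: its entries admit compatible toral resolutions coming from the fundamental diagram \eqref{zfund}, and one can then trace the cokernel through the zig-zag of that proof to produce a single well-defined isomorphism between $\cok[F_*\to R_*]$ and $\cok[F_{1*}\to R_{1*}]$, independent of all the auxiliary choices made along the way.
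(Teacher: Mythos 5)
Your treatment of (i)--(iii) is essentially the paper's: the same fiber product $H\times_{G}H_{1}$, the same splitting via Lemma 3.5, and the same identification of derived groups and coradicals (the paper reaches (iii) by splitting the sequence $1\ra F\ra (H\times_{G}H_{1})^{\tor}\ra R_{1}\ra 1$ with Lemma 2.2, which amounts to the same computation you do). These parts are fine.

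Part (iv) is where you diverge, and you have correctly located the obstruction in your own argument but not removed it. The quasi-isomorphisms of Proposition 3.4 are realized by a zig-zag through the complex $(H\times_{G}Z(G)\ra R\oplus Z(G))$, whose entries are of multiplicative type but not tori; the functor $T\mapsto T_{*}=\hom_{S\text{-gr}}(\bg_{m,S},T)$ is exact on short exact sequences of tori but not on groups of multiplicative type (it kills finite groups, and ${\rm{Ext}}^{1}(\bg_{m,S},\mu_{n})\neq 0$), so applying $(-)_{*}$ degreewise to the zig-zag does not preserve quasi-isomorphism, and $\cok[F_{*}\ra R_{*}]$ is not a cohomology sheaf of the complex $(F\ra R)$ itself. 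Your proposed fix --- ``compatible toral resolutions'' of the intermediate complex and ``tracing the cokernel through the zig-zag'' --- is exactly the missing argument, not a sketch of it; carried out honestly it amounts to setting up the derived equivalence between finitely generated twisted-constant groups and two-term complexes of multiplicative type groups (Borovoi's approach, cf.\ Remark 4.5), which is far more machinery than is needed. The paper's route is elementary and stays entirely inside the category of cocharacter modules, where $(-)_{*}$ \emph{is} exact: the two exact sequences $1\ra F\ra (H\times_{G}H_{1})^{\tor}\ra R_{1}\ra 1$ and $1\ra F_{1}\ra (H\times_{G}H_{1})^{\tor}\ra R\ra 1$ give, after applying $(-)_{*}$, a composite $F_{*}\ra F_{*}\oplus R_{1*}\simeq F_{1*}\oplus R_{*}\twoheadrightarrow R_{*}$ equal to the map induced by $F\ra H\ra R$, and the kernel--cokernel exact sequence of this composite immediately identifies $\cok[F_{*}\ra R_{*}]$ with $\cok[F_{1*}\ra R_{1*}]$ canonically. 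I would replace your (iv) with this argument.
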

\begin{proof} The given flasque resolutions induce central extensions
\begin{equation}
1\ra F \ra H \times_{ G }\be H _{1}\ra H _{1}\ra 1 \label{eq_ast3}
\end{equation}
and
\begin{equation}
1\ra F _{1}\ra H _{1}\times_{ G }\be H \ra H \ra 1. \label{eq_ast4}
\end{equation}
Since both $H$ and $H_{1}$ are quasi-trivial, the above extensions
are split by the previous lemma. It follows that $ F \times_{S} H
_{1}\simeq  H \times_{ G } H _{1} \simeq  F _{1}\times_{S} H $,
which proves (i). Further (see Remark 2.5), \eqref{eq_ast3} and
\eqref{eq_ast4} show that $ H^{\der}_{1}\simeq ( H \times_{G} H
_{1})^{\der}\simeq H ^{\der}$, i.e., (ii) holds. Now the diagram
$$
\xymatrix{   && 1\ar[d]&1\ar[d] & \\
&  & ( H \times_{ G }\be H _{1})^{\der}\ar[d] \ar[r]^(.6){ \sim} & H ^{\der}_{1}\ar[d] &\\
1 \ar[r] &  F  \ar[r] &  H \times_{ G } H _{1} \ar[r] & H _{1}
\ar[r] & 1 }
$$
yields an exact sequence of $S$-tori
$$
1\ra F \ra( H \times_{ G } H _{1})^{\tor}\ra R _{1}\ra 1.
$$
By Lemma 2.2 this sequence splits. Thus $(H\times_{G} H_{\e
1})^{\tor}\simeq F \times_{S} R_{\e 1}$. Similarly, $( H _{\e
1}\times_{ G } H )^{\tor}\simeq F _{\e 1}\times_{S} R $ and (iii)
follows. Finally, applying the kernel-cokernel exact sequence
\cite{adt}, Proposition I.0.24, p.16, to
$$
F_{*}\ra F_{*}\oplus R_{\e 1*}\simeq F_{\e 1*}\oplus R_{*}
\twoheadrightarrow R_{*},
$$
we conclude that the morphisms $ F_{*}\ra R_{*}$ and $ F_{\e 1*}\ra
R_{\e 1*}$ (induced by the compositions $ F \ra H \ra R $ and $F_{\e
1}\ra H_{\e 1}\ra R_{\e 1}$, respectively) have canonically
isomorphic cokernels. This proves (iv).
\end{proof}

Part (iv) of the above proposition, together with \cite{ct}, Remark
3.2.2 (which carries over to the present setting {\it mutatis
mutandis}), motivates the following definition.

\begin{definition} Let $G$ be a reductive group scheme over an admissible
scheme $S$. Let $1\ra F \ra H \ra G \ra 1$ be a flasque resolution
of $G$ and set $R=H^{\tor}$. Then the twisted-constant $S$-group scheme
$$
\pi_{1}(G\e):=\cok[F_{\be *}\ra R_{*}]
$$
is independent (up to isomorphism) of the  chosen flasque resolution of $G$. It is called the \textit{algebraic fundamental group} of $G$.
\end{definition}

\begin{remark} Since $\mu_{*}=0$, sequence \eqref{sec} shows that
$F_{\be *}\ra R_{*}$ is injective. Thus, by the definition of
$\pi_{1}( G \e)$, there exists an exact sequence of \'etale, finitely generated twisted-constant $S$-group schemes
$$
1\ra F _{\be *}\ra R _{*}\ra\pi_{1}( G \e)\ra 1.
$$
\end{remark}

\smallskip

If $\mu$ is a finite $S$-group scheme of multiplicative type, set
$$
\mu(-1):=\hom_{S\lbe\text{-gr}}(\mu^{*},(\bq/\bz\be)_{S}).
$$

\begin{proposition} Let $S$ be an admissible scheme and let $G$ be a reductive group
scheme over $S$ with fundamental group $\mu$. Then there exists an
exact sequence of \'etale, finitely generated twisted-constant $S$-group schemes
$$
1\ra\mu(-1)\ra\pi_{1}( G \e)\ra( G ^{\tor})_{*}\ra 1.
$$
\end{proposition}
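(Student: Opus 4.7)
The plan is to derive the sequence from the four-term exact sequence \eqref{sec}, which the fundamental diagram \eqref{fund} provides together with a natural factorization through the object $M = F/\mu$ appearing in \eqref{fund}. Specifically, I would split \eqref{sec} as the two short exact sequences
\begin{equation*}
1\ra\mu\ra F\ra M\ra 1 \quad\text{and}\quad 1\ra M\ra R\ra G^{\tor}\ra 1.
\end{equation*}
The first observation is that $M$ is itself an $S$-torus: being an fppf quotient of $S$-groups of multiplicative type it is of multiplicative type, and since $\mu$ is finite $M$ is connected of the same relative dimension as $F$, hence a torus. Thus the left-hand sequence is an isogeny of $S$-tori, and the right-hand sequence is an exact sequence of $S$-tori.

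Next I would apply the cocharacter functor $T\mapsto T_{*}$ to each. For the right-hand sequence, Cartier duality yields an exact sequence $0\ra (G^{\tor})^{*}\ra R^{*}\ra M^{*}\ra 0$ of finitely generated $\bz$-free twisted-constant \'etale $S$-group schemes; applying $\hom_{S\lbe\text{-gr}}(-,\bz_{\!S})$, using \eqref{bas0} and the vanishing of $\ext^{1}(-,\bz_{\!S})$ against $\bz$-free modules, one obtains
\begin{equation*}
0\ra M_{*}\ra R_{*}\ra (G^{\tor})_{*}\ra 0.
\end{equation*}
For the isogeny $1\ra\mu\ra F\ra M\ra 1$, Cartier duality yields $0\ra M^{*}\ra F^{*}\ra \mu^{*}\ra 0$, and the associated long exact $\ext$-sequence against $\bz_{\!S}$, together with the vanishings $\hom(\mu^{*},\bz_{\!S})=0$ (since $\mu^{*}$ is finite \'etale and $\bz_{\!S}$ is torsion-free) and $\ext^{1}(F^{*},\bz_{\!S})=0$ (since $F^{*}$ is $\bz$-free), together with the identification $\ext^{1}(\mu^{*},\bz_{\!S})\simeq\hom(\mu^{*},(\bq/\bz)_{S})=\mu(-1)$ coming from $0\ra\bz_{\!S}\ra\bq_{\!S}\ra(\bq/\bz)_{S}\ra 0$ (the Ext-groups against $\bq_{\!S}$ vanish because $\mu^{*}$ is torsion and $\bq_{\!S}$ is uniquely divisible), delivers
\begin{equation*}
0\ra F_{*}\ra M_{*}\ra\mu(-1)\ra 0.
\end{equation*}

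Finally I would combine these. We now have a chain of inclusions $F_{*}\subset M_{*}\subset R_{*}$ of \'etale finitely generated twisted-constant $S$-group schemes whose successive quotients are $\mu(-1)$ and $(G^{\tor})_{*}$. The third isomorphism theorem yields the desired exact sequence
\begin{equation*}
1\ra\mu(-1)\ra\pi_{1}(G)\ra (G^{\tor})_{*}\ra 1,
\end{equation*}
where the outer terms are identified via Definition 3.6, namely $\pi_{1}(G)=R_{*}/F_{*}$. The main subtlety is verifying that $M$ is a torus, so that the cocharacter functor can be applied to both short sequences; everything else is formal, since after passage to a suitable finite Galois cover of $S$ that splits all the tori and finite multiplicative type groups in sight, the computations reduce to a standard Ext computation for finitely generated $\varDelta$-modules.
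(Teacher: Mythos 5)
Your proof is correct and follows essentially the same route as the paper, whose proof simply invokes Colliot-Th\'el\`ene's Proposition 6.4 applied to the four-term sequence \eqref{sec}: splitting that sequence at $M=F/\mu$, passing to (co)character lattices, and splicing via the third isomorphism theorem is exactly the intended argument. Your write-up has the added merit of being self-contained and of carrying out the dualizations explicitly (the paper itself warns that the first sequence in the cited source is dualized incorrectly).
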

\begin{proof} The proof of \cite{ct}, Proposition 6.4, carries
over to the present setting {\it mutatis mutandis}, using
\eqref{sec} (but note that the first exact sequence in [loc. cit.]
has been dualized incorrectly).
\end{proof}

\begin{remark} If $G$ is {\it semisimple}, then $G^{\tor}=0$ and
the proposition shows that $\pi_{1}(G\e)=\mu(-1)$.  Thus the
algebraic fundamental group of $G$ is the Pontryagin dual of the
\'etale fundamental group of $G$. On the other hand, if $G$ is a
torus (so that $\mu=0$ and $ G^{\tor}=G$), the proposition yields
the equality $\pi_{1}\lbe(G\e)=G_{*}$.
\end{remark}

\begin{proposition} Let $S$ be an admissible scheme.
\begin{enumerate}
\item[(i)] Every homomorphism of reductive $S$-group schemes
$\lambda\colon G _{ 1}\ra G _{ 2}$ induces a homomorphism of
algebraic fundamental $S$-group schemes $\lambda_{*}\colon\pi_{1}( G
_{ 1})\ra\pi_{1}( G _{ 2})$.
\item[(ii)] If $\kappa\colon G _{ 1}\ra G _{ 2}$ and
$\lambda\colon G _{ 2}\ra G _{ 3}$ are homomorphisms of reductive
$S$-group schemes, then
$(\lambda\lbe\circ\lbe\kappa)_{*}=\lambda_{*}\lbe\circ\lbe\kappa_{*}$.
\end{enumerate}
\end{proposition}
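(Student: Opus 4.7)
The plan is to construct $\lambda_*$ by lifting $\lambda$ to a morphism between chosen flasque resolutions and then reading off the induced map on the defining cokernel $R_{*}/F_{*}$. The main technical point will be independence of all the choices made in the construction.

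First, I would fix flasque resolutions $1 \to F_i \to H_i \overset{p_i}{\longrightarrow} G_i \to 1$ given by Proposition 3.2, for $i = 1, 2$, and set $R_i = H_i^{\tor}$. To lift $\lambda$ to a morphism $\widetilde{\lambda}\colon H_1 \to H_2$, I would form the fibre product $E = H_1 \times_{G_2} H_2$ with respect to $\lambda \circ p_1$ and $p_2$. Pulling back the central extension $1 \to F_2 \to H_2 \to G_2 \to 1$ along $\lambda \circ p_1$ exhibits $E$ as a central extension
$$
1 \to F_2 \to E \to H_1 \to 1.
$$
Since $F_2$ is a flasque $S$-torus and $H_1$ is a quasi-trivial $S$-group scheme, Lemma 3.5 provides a splitting $H_1 \to E$; composing it with the second projection $E \to H_2$ yields the sought lift $\widetilde{\lambda}$, which satisfies $p_2 \circ \widetilde{\lambda} = \lambda \circ p_1$ and therefore carries $F_1 = \krn p_1$ into $F_2 = \krn p_2$. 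Passage to coradicals then produces a compatible pair of morphisms $F_1 \to F_2$ and $R_1 \to R_2$. Applying the cocharacter functor and using the exact sequence of Remark 3.8 gives the desired map $\lambda_{*}\colon \pi_{1}(G_1) \to \pi_{1}(G_2)$ on cokernels.

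The main obstacle is well-definedness. Any two splittings of the central extension above differ by a homomorphism $\alpha\colon H_1 \to F_2$; since $H_1^{\der}$ is simply-connected and $F_2$ is a torus, $\alpha$ must factor through $H_1^{\tor} = R_1$ as some $\alpha^{\tor}\colon R_1 \to F_2$. Thus the two maps $R_1 \to R_2$ produced by two splittings differ by the composition $R_1 \xrightarrow{\alpha^{\tor}} F_2 \to R_2$, and the corresponding cocharacter map factors through $F_{2*}$, hence vanishes modulo $F_{2*}$ in $\pi_1(G_2) = R_{2*}/F_{2*}$. Independence from the choice of flasque resolutions of $G_1$ and $G_2$ then follows by applying the same argument to the identity morphism $G_i \to G_i$ and invoking the canonical isomorphism of Proposition 3.6(iv).

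Finally, for part (ii), I would choose flasque resolutions of $G_1$, $G_2$, $G_3$ and construct lifts $\widetilde{\kappa}\colon H_1 \to H_2$ and $\widetilde{\lambda}\colon H_2 \to H_3$ as above. The composition $\widetilde{\lambda} \circ \widetilde{\kappa}\colon H_1 \to H_3$ is itself a valid lift of $\lambda \circ \kappa$, so the well-definedness established in (i) immediately yields $(\lambda \circ \kappa)_{*} = \lambda_{*} \circ \kappa_{*}$.
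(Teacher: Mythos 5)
Your proposal is correct and follows essentially the same route as the paper: lift $\lambda$ through the fibre product $H_1\times_{G_2}H_2$ split by Lemma 3.5, observe that two liftings differ by a homomorphism $H_1\to F_2$ killing the discrepancy in $\cok[F_{2*}\to R_{2*}]$, reduce independence of resolutions to Proposition 3.6(iv), and obtain (ii) by composing liftings. The only detail worth making explicit is why the difference of two liftings is a group homomorphism (immediate here from the centrality of $F_2$ in $H_2$, or via the Raynaud argument the paper cites).
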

\begin{proof} The following proof of (i) is very similar to the proof of
\cite{ct}, Proposition 6.6(i). For $i=1,2$, let $1\ra F _{i}\ra H
_{i}\ra G _{i}\ra 1$ be a flasque resolution of $ G_{i}$. Then, as
seen in the proof of Proposition 3.6(i), there exists an isomorphism
$H_{1}\times_{G_{2}} H_{2}\simeq H_{1}\times_{S} F_{2}$. Thus
$\lambda\colon G_{1}\ra G_{2}$ induces a homomorphism $H_{1}\ra
H_{2}$ which fits into an exact commutative diagram
$$
\xymatrix{1\ar[r]& F _{1}\ar[r]\ar[d]& H_{1}\ar[r]\ar[d]& G_{
1}\ar[r]
\ar[d]^{\lambda}&1\\
1\ar[r]& F_{2}\ar[r]& H_{2}\ar[r]& G_{2}\ar[r]&1.}
$$
Any two such liftings $H_{\e 1}\ra H_{\e 2}$ of $\lambda$  differ by
an $S$-morphism $H_{1}\ra F_{2}$ which is, in fact, an
$S$-homomorphism by the argument in \cite{ray}, proof of Corollary
VII.1.2, p.103. The rest of the proof of (i) is as in \cite{ct},
proof of Proposition 6.6(i), using Proposition 3.6 above. The
homomorphism $\lambda_{\le *}\colon\pi_{1}( G _{ 1})\ra\pi_{1}( G _{
2})$ thus defined is {\it independent of the choice of liftings}. To
prove (ii), one chooses as lifting of $\lambda\circ\kappa\colon
 G _{1}\ra G _{ 3}$ the composition of a lifting of $\lambda$ and a lifting
of $\kappa$, and uses the independence of choices just mentioned.
\end{proof}

It follows from part (ii) of the previous proposition that an exact
sequence
$$
1\longrightarrow G _{1}\overset{\kappa}\longrightarrow G
_{2}\overset{\lambda} \longrightarrow G _{3}\longrightarrow 1
$$
of reductive group schemes over an admissible scheme $S$ induces a
{\it complex} of algebraic fundamental $S$-group schemes
$$
1\longrightarrow\pi_{1}( G _{1})\overset{\kappa_{\lbe
*}}\longrightarrow\pi_{1}( G _{2})\overset{\lambda_{\lbe
*}}\longrightarrow\pi_{1}( G _{3})\longrightarrow 1.
$$
We will show that the preceding complex is in fact {\it exact}, by
proving first that it is exact when $G_{1}$ is either a semisimple
$S$-group scheme or an $S$-torus, and then using these particular
cases to obtain the general result (see Theorem 3.14 below).

\begin{lemma} Let $S$ be an admissible scheme and let
$1\ra G _{ 1}\ra G _{ 2}\ra G _{3}\ra 1$ be an exact sequence of
reductive $S$-group schemes, where $ G _{1}$ is {\rm{semisimple}}.
Then
$$
1\ra\pi_{1}( G _{ 1})\ra\pi_{1}( G _{ 2})\ra\pi_{1}( G _{ 3}) \ra 1
$$
is exact.
\end{lemma}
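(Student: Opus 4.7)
My plan is to apply Proposition 3.10 to each $G_i$, arrange the resulting sequences as the columns of a commutative $3\times 3$ diagram, and then invoke the nine lemma.

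For each $i\in\{1,2,3\}$, Proposition 3.10 furnishes a short exact sequence of \'etale twisted-constant $S$-group schemes
\begin{equation*}
1 \to \mu_{i}(-1) \to \pi_{1}(G_{i}) \to (G_{i}^{\tor})_{*} \to 1,
\end{equation*}
where $\mu_{i}$ is the fundamental group of $G_{i}$. I would take these three sequences as the columns of a diagram whose horizontal arrows are induced, on the one hand, by the homomorphisms $\mu_{1}\to\mu_{2}\to\mu_{3}$ provided by Corollary 2.11, and on the other hand by the functorial homomorphisms $(G_{i}^{\tor})_{*}\to(G_{i+1}^{\tor})_{*}$. The commutativity of this diagram reduces to the naturality of the sequence of Proposition 3.10 in $G$; this is the main technical obstacle, and I would verify it by choosing compatible flasque resolutions of $G_{1},G_{2},G_{3}$ along the lines of Proposition 3.13(i) and chasing them through the construction of the fundamental diagram \eqref{fund}.

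The next step is to check exactness of the top and bottom rows of the $3\times 3$ diagram. Corollary 2.11 applied to the given exact sequence yields
\begin{equation*}
1 \to \mu_{1} \to \mu_{2} \to \mu_{3} \to G_{1}^{\tor} \to G_{2}^{\tor} \to G_{3}^{\tor} \to 1.
\end{equation*}
Since $G_{1}$ is semisimple, $G_{1}^{\tor}=0$; the above collapses to an exact sequence $1\to\mu_{1}\to\mu_{2}\to\mu_{3}\to 1$ together with an isomorphism $G_{2}^{\tor}\isoto G_{3}^{\tor}$. Cartier duality is an exact anti-equivalence between $S$-group schemes of multiplicative type and finitely generated twisted-constant $S$-group schemes, so $1\to\mu_{3}^{*}\to\mu_{2}^{*}\to\mu_{1}^{*}\to 1$ is exact; applying $\underline{\hom}_{S\text{-gr}}(-,(\bq/\bz)_{S})$ preserves this exactness because $\mu_{i}^{*}$ is finite and $\bq/\bz$ is injective as an \'etale-local abelian group. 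Hence the top row $1\to\mu_{1}(-1)\to\mu_{2}(-1)\to\mu_{3}(-1)\to 1$ is exact. The bottom row becomes $1\to 0\to(G_{2}^{\tor})_{*}\to(G_{3}^{\tor})_{*}\to 1$ via the isomorphism above, and is trivially exact.

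Finally, the middle row $\pi_{1}(G_{1})\to\pi_{1}(G_{2})\to\pi_{1}(G_{3})$ is a complex by Proposition 3.13(ii) applied to the zero composition $G_{1}\to G_{2}\to G_{3}$, which factors through the trivial $S$-group scheme and so induces the zero map on algebraic fundamental groups. With the exact columns and exact top and bottom rows already in place, the nine lemma then forces the middle row to be short exact, which is precisely the desired statement.
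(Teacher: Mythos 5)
Your proof is correct and follows essentially the same route as the paper's: both arguments rest on the extension $1\ra\mu(-1)\ra\pi_{1}(G)\ra(G^{\tor})_{*}\ra 1$, the exactness of $1\ra\mu_{1}\ra\mu_{2}\ra\mu_{3}\ra 1$ coming from the semisimplicity of $G_{1}$, and the resulting isomorphism $(G_{2}^{\tor})_{*}\simeq(G_{3}^{\tor})_{*}$. The only difference is organizational --- you close with the nine lemma on the full $3\times 3$ diagram where the paper chases the two-row diagram for $G_{2}$ and $G_{3}$ and then identifies the kernel with $\mu_{1}(-1)=\pi_{1}(G_{1})$ --- and the naturality of the $\mu(-1)$ sequence that you flag as the main technical point is likewise taken for granted in the paper.
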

\begin{proof} Since $ G _{1}= G _{1}^{\der}\subset G _{2}^{\der}$, the given
sequence induces an exact sequence of semisimple $S$-group schemes
$1\ra G _{ 1}\ra G _{ 2}^{\der}\ra G _{ 3}^{\der}\ra 1$. Now, by
Proposition 2.8, there exists an exact commutative diagram
$$
\xymatrix{1\ar[r]&\Gtil_{ 1}\ar@{->>}[d]\ar[r]&\widetilde{ G }_{
2}\ar@{->>}[d]\ar[r]&\widetilde{ G }_{3}\ar@{->>}[d]
\ar[r]&1\\
1\ar[r]& G _{1}\ar[r]& G _{ 2}^{\der}\ar[r]&G _{ 3}^{\der}\ar[r]&1,
}
$$
which yields an exact sequence
$$
1\ra\mu_{ 1}(-1)\ra\mu_{ 2}(-1)\ra\mu_{ 3}(-1)\ra 1.
$$
On the other hand, there exists an exact sequence of
twisted-constant $S$-group schemes $1\ra G_{3}^{*}\ra G_{2}^{*}\ra
G_{1}^{*}=1$, and it follows that the canonical map $( G
_{2}^{\tor})_{*}\ra( G _{3}^{\tor})_{*}$ is an isomorphism. Thus
there exists an exact commutative diagram
$$
\xymatrix{1\ar[r]&\mu_{ 2}(-1)\ar@{->>}[d]\ar[r]&\pi_{1}( G _{ 2})
\ar[d]\ar[r]&( G _{2}^{\tor})_{*}\ar@{=}[d]\ar[r]&1\\
1\ar[r]&\mu_{ 3}(-1)\ar[r]&\pi_{1}( G _{ 3})\ar[r]&( G _{
3}^{\tor})_{*}\ar[r]&1,}
$$
which yields an exact sequence
$$
1\ra\mu_{ 1}(-1)\ra\pi_{1}( G _{ 2})\ra\pi_{1}( G _{ 3})\ra 1.
$$
Since $\mu_{1}(-1)=\pi_{1}( G _{1})$ by Remark 3.10, the proof is
complete.
\end{proof}

\begin{lemma} Let $S$ be an admissible scheme and let
$1\ra G _{1}\ra G _{2}\ra G _{3}\ra 1$ be an exact sequence of
reductive group schemes over $S$, where $ G _{1}$ is an
$S$-{\rm{torus}}. Then
$$
1\ra\pi_{1}( G _{1})\ra\pi_{1}( G _{2})\ra\pi_{1}( G _{3})\ra 1
$$
is exact.
\end{lemma}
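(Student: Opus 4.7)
The plan is to follow the strategy of Lemma 3.12, using Corollary 2.11 and Proposition 3.9 together with a snake-lemma argument, but with considerably more care since neither end of the Proposition 3.9 sequence behaves trivially here.

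Since $G_{1}$ is a torus, $\mu_{1}=1$ and $G_{1}^{\tor}=G_{1}$, so Corollary 2.11 reduces to the six-term exact sequence $1\to\mu_{2}\to\mu_{3}\to G_{1}\to G_{2}^{\tor}\to G_{3}^{\tor}\to 1$. I would split this into the three short exact sequences $1\to\mu_{2}\to\mu_{3}\to K\to 1$, $1\to K\to G_{1}\to\overline{G}_{1}\to 1$, and $1\to\overline{G}_{1}\to G_{2}^{\tor}\to G_{3}^{\tor}\to 1$, where $K:=\img(\mu_{3}\to G_{1})$ is a finite $S$-group scheme of multiplicative type and $\overline{G}_{1}:=G_{1}/K$ is an $S$-torus. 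Cartier-dualizing (exact on multiplicative-type $S$-group schemes) and then applying $\hom_{\bz}(-,\bz)$ together with $\ext^{1}_{\bz}(-,\bz)$ --- using that character lattices of $S$-tori are torsion-free and that $\ext^{1}_{\bz}(N,\bz)\simeq\hom(N,\bq/\bz)$ for a finite \'etale sheaf $N$ --- yields respectively three short exact sequences of finitely generated \'etale $S$-group schemes: $0\to\mu_{2}(-1)\to\mu_{3}(-1)\to K(-1)\to 0$, $0\to G_{1,*}\to\overline{G}_{1,*}\to K(-1)\to 0$, and $0\to\overline{G}_{1,*}\to(G_{2}^{\tor})_{*}\to(G_{3}^{\tor})_{*}\to 0$.

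Next, I would apply the snake lemma to the commutative diagram whose rows are the short exact sequences $0\to\mu_{i}(-1)\to\pi_{1}(G_{i})\to(G_{i}^{\tor})_{*}\to 0$ of Proposition 3.9 for $i=2,3$, with vertical maps induced by $\lambda$ via the functoriality of $\pi_{1}$ (Proposition 3.11). By the dualized sequences above, the left vertical has kernel $0$ and cokernel $K(-1)$, while the right vertical has kernel $\overline{G}_{1,*}$ and cokernel $0$, so the snake yields
\[
0\to\krn[\pi_{1}(G_{2})\to\pi_{1}(G_{3})]\to\overline{G}_{1,*}\to K(-1)\to\cok[\pi_{1}(G_{2})\to\pi_{1}(G_{3})]\to 0.
\]

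The main technical point, which I expect to be the principal obstacle, is to identify the snake connecting map $\overline{G}_{1,*}\to K(-1)$ with the natural surjection in the middle dualized sequence. This identification should follow from a diagram chase exploiting the naturality of the Proposition 3.9 extension and of the flasque-resolution description of $\pi_{1}$. Granted it, the snake sequence forces $\krn[\pi_{1}(G_{2})\to\pi_{1}(G_{3})]\simeq G_{1,*}=\pi_{1}(G_{1})$ and $\cok[\pi_{1}(G_{2})\to\pi_{1}(G_{3})]\simeq 0$. A final naturality check --- using that the composition $G_{1}\to G_{2}\to G_{3}$ is zero and that the canonical map $\pi_{1}(G_{1})\to(G_{2}^{\tor})_{*}$ is injective (factoring as $G_{1,*}\hookrightarrow\overline{G}_{1,*}\hookrightarrow(G_{2}^{\tor})_{*}$) --- shows that the natural map $\pi_{1}(G_{1})\to\pi_{1}(G_{2})$ is injective with image equal to this kernel, yielding the desired exact sequence.
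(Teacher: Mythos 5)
Your proof is correct and is essentially the paper's own argument: your $K$ and $\overline{G}_{1}$ coincide with the paper's $\mu'=G_{1}\cap G_{2}^{\der}$ and $G_{1}/\mu'$, your three dualized short exact sequences are exactly the three displayed sequences in the paper's proof, and the concluding snake-lemma step on the Proposition 3.9 extensions is identical. The only differences are organizational --- you extract the six-term sequence from Corollary 2.11 where the paper rederives its pieces via Proposition 2.9 (the isomorphism $\widetilde{G}_{2}\simeq\widetilde{G}_{3}$), and you explicitly flag the identification of the snake connecting map $\overline{G}_{1*}\to K(-1)$ with the natural surjection, a point the paper leaves implicit.
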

\begin{proof} By Proposition 2.9, the map
$\widetilde{ G }_{ 2}\ra\widetilde{ G }_{3}$ induced by $ G _{2}\ra
G _{3}$ is an isomorphism. Now set $\mu^{\e\prime}= G _{ 1}\cap G _{
2}^{\der}=\krn[\e G _{2}^{\der}\ra G _{3}^{\der}\e]$ and consider
$$
\xymatrix{1\ar[r] &\mu_{ 2}\ar[r]\ar[d] &\widetilde{ G }_{ 2}\ar[r]\ar[d]^{\simeq} & G _{2}^{\der}\ar[r]\ar[d] & 1\\
1\ar[r] &\mu_{3}\ar[r] & \widetilde{ G }_{3}\ar[r] & G _{
3}^{\der}\ar[r] &1.}
$$
The above diagram yields an exact sequence
\begin{equation}\label{mus}
1\ra\mu_{2}(-1)\ra\mu_{3}(-1)\ra\mu^{\e\prime}(-1)\ra 1.
\end{equation}
On the other hand, the diagram
$$
\xymatrix{1\ar[r] &\mu^{\e\prime}\ar[r]\ar@{^{(}->}[d] & G _{ 2}^{\der}\ar[r]\ar@{^{(}->}[d] & G _{ 3}^{\der}\ar[r]\ar@{^{(}->}[d] & 1\\
1\ar[r] & G _{ 1}\ar[r] &  G _{ 2}\ar[r] & G _{ 3}\ar[r] &1}
$$
yields an exact sequence of $S$-group schemes of multiplicative type
$$
1\ra\mu^{\e\prime}\ra G _{1}\ra G _{2}^{\tor}\ra G _{3}^{\tor} \ra
1.
$$
The preceding sequence induces exact sequences
\begin{equation}\label{tor2}
1\ra( G _{1}/\mu^{\e\prime})_{*}\ra( G _{ 2}^{\tor})_{*}\ra( G _{
3}^{\tor})_{*}\ra 1
\end{equation}
and
\begin{equation}\label{tor3}
1\ra (G _{1})_{*}\ra( G _{1}/\mu^{\e\prime})_{*}\ra
\mu^{\e\prime}(-1)\ra 1.
\end{equation}
Now the diagram
$$
\xymatrix{0\ar[r] &\mu_{ 2}(-1)\ar[r]\ar[d] &\pi_{1}( G _{ 2})\ar[r]\ar[d] &( G _{ 2}^{\tor})_{*}\ar[d]\ar[r] &1\\
0\ar[r] &\mu_{ 3}(-1)\ar[r] &\pi_{1}( G _{ 3})\ar[r] &( G _{
3}^{\tor})_{*}\ar[r] &1,}
$$
together with \eqref{mus} and \eqref{tor2}, yields an exact sequence
$$
\krn\be[\pi_{1}( G _{2})\ra\pi_{1}( G _{ 3})]\hookrightarrow( G _{
1}/\mu^{\e\prime})_{*}\ra\mu^{\e\prime}(-1)\twoheadrightarrow
\cok\be[\pi_{1}( G _{ 2})\ra\pi_{1}( G _{ 3})].
$$
It now follows from the above sequence and \eqref{tor3} that
$\krn\be[\pi_{1}( G _{2})\ra\pi_{1}( G _{3})]\simeq G _{
1*}=\pi_{1}( G _{1})$ and $\cok\be[\pi_{1}( G _{2})\ra\pi_{1}( G
_{3})]=0$, as claimed.
\end{proof}

\begin{theorem} Let $S$ be an admissible scheme and let
$1\ra G _{ 1}\ra G _{ 2}\ra G _{ 3}\ra 1$ be an exact sequence of
reductive group schemes over $S$. Then the induced sequence
$$
1\ra\pi_{1}( G _{1})\ra\pi_{1}( G _{2})\ra\pi_{1}( G _{3})\ra 1
$$
is exact.
\end{theorem}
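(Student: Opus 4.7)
The plan is to interpolate between $G_1$ and $G_2$ the reductive $S$-group scheme $G^{\e\prime}:=G_2/G_1^{\der}$ and to deduce the theorem from Lemmas 3.12 and 3.13 by means of the nine lemma. Since $G_1^{\der}$ is characteristic in $G_1$ and $G_1$ is normal in $G_2$, the subgroup scheme $G_1^{\der}$ is normal and semisimple in $G_2$, so the formation of quotients in the reductive category produces a reductive $S$-group scheme $G^{\e\prime}$ fitting into two short exact sequences of reductive $S$-group schemes
\begin{equation*}
1\ra G_1^{\der}\ra G_2\ra G^{\e\prime}\ra 1\quad\text{and}\quad 1\ra G_1^{\tor}\ra G^{\e\prime}\ra G_3\ra 1,
\end{equation*}
the second obtained from the isomorphism theorem using that $G_1^{\der}\subset G_1=\krn[\e G_2\ra G_3\e]$. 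Alongside these I would also use the standard exact sequence $1\ra G_1^{\der}\ra G_1\ra G_1^{\tor}\ra 1$.

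Applying Lemma 3.12 to the first of the above sequences and to $1\ra G_1^{\der}\ra G_1\ra G_1^{\tor}\ra 1$, and Lemma 3.13 to the second, I obtain three short exact sequences of algebraic fundamental $S$-group schemes. By the functoriality of $\pi_1$ (Proposition 3.11), these sequences, together with the identity maps on $\pi_1(G_1^{\der})$ and $\pi_1(G_3)$, assemble into a commutative $3\times 3$ diagram of \'etale, finitely generated twisted-constant $S$-group schemes whose top row is the trivial short exact sequence $1\ra\pi_1(G_1^{\der})\overset{{\rm id}}\ra\pi_1(G_1^{\der})\ra 1$ (third term trivial), whose bottom row is $1\ra\pi_1(G_1^{\tor})\ra\pi_1(G^{\e\prime})\ra\pi_1(G_3)\ra 1$ (exact by Lemma 3.13), whose left column is $1\ra\pi_1(G_1^{\der})\ra\pi_1(G_1)\ra\pi_1(G_1^{\tor})\ra 1$ (exact by Lemma 3.12), whose middle column is $1\ra\pi_1(G_1^{\der})\ra\pi_1(G_2)\ra\pi_1(G^{\e\prime})\ra 1$ (exact by Lemma 3.12), and whose right column is the trivial short exact sequence $1\ra\pi_1(G_3)\overset{{\rm id}}\ra\pi_1(G_3)\ra 1$ (first term trivial). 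The middle row is precisely the complex
\begin{equation*}
1\ra\pi_1(G_1)\ra\pi_1(G_2)\ra\pi_1(G_3)\ra 1
\end{equation*}
whose exactness we seek.

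With two of the three rows and all three columns short exact, the nine lemma forces the middle row to be short exact as well, completing the argument. The main step requiring care is the bookkeeping for commutativity of each square of the $3\times 3$ diagram; most squares commute by Proposition 3.11 alone, and the only slightly non-tautological point is that $G_1\ra G^{\e\prime}$ factors through $G_1^{\tor}\hookrightarrow G^{\e\prime}$ while $G_2\ra G_3$ factors through $G^{\e\prime}\twoheadrightarrow G_3$, both immediate from the definition $G^{\e\prime}=G_2/G_1^{\der}$. The substantive content of the theorem thus lies entirely in Lemmas 3.12 and 3.13; the present argument is a diagram chase, and the only anticipated obstacle is setting up the commutative diagram cleanly so that the nine lemma can be invoked directly.
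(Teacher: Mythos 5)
Your proposal is correct and follows essentially the same route as the paper: the same interpolating group $G_3^{\e\prime}=G_2/G_1^{\der}$, the same two auxiliary exact sequences, and the same appeal to Lemmas 3.12 and 3.13. The only difference is packaging --- you invoke the nine lemma on a $3\times 3$ diagram (using Lemma 3.12 for the column $1\ra\pi_1(G_1^{\der})\ra\pi_1(G_1)\ra\pi_1(G_1^{\tor})\ra 1$), whereas the paper first extracts surjectivity of $\pi_1(G_2)\ra\pi_1(G_3)$ and then identifies its kernel with $\pi_1(G_1)$ by comparing with the sequence of Proposition 3.9, which is the same sequence in view of Remark 3.10.
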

\begin{proof} Let $ G _{3}^{\e\prime}= G _{2}/ G _{1}^{\der}$. Then there exist
exact sequences
$$
1\ra G _{1}^{\der}\ra G _{2}\ra G _{3}^{\e\prime}\ra 1
$$
and
$$
1\ra G _{1}^{\tor}\ra G _{3}^{\e\prime}\ra G _{3}\ra 1.
$$
Therefore, by Lemmas 3.12 and 3.13, there exists an exact
commutative diagram
$$
\xymatrix{&&1\ar[d]\\
&& \pi_{1}( G _{ 1}^{\der})\ar[d] & \\
&&\pi_{1}( G _{ 2})\ar[d]\ar@{..>>}[rd] &\\
1\ar[r]&\pi_{1}( G _{ 1}^{\tor})\ar[r] &\pi_{1}( G _{3}^{\e\prime})
\ar[d]\ar[r] &\pi_{1}( G _{3})\ar[r]&1\\
&&1}
$$
which shows that the dotted arrow is a surjection. Further, the
above diagram yields the bottom row of the exact commutative diagram
$$
\xymatrix{1\ar[r] &\mu_{ 1}(-1)\ar[r]\ar@{=}[d] &\pi_{1}( G _{ 1})\ar[r]\ar[d] &( G _{ 1}^{\tor})_{*}\ar[r]\ar@{=}[d] &1 \\
1\ar[r] &\pi_{1}( G _{ 1}^{\der})\ar[r] &\krn[\pi_{1}( G _{
2})\ra\pi_{1}( G _{ 3})]\ar[r] &\pi_{1}( G _{ 1}^{\tor})\ar[r] &1.}
$$
The theorem is now clear.
\end{proof}

\begin{remark} The above result extends \cite{ct}, Proposition 6.8, from the
case of a base of the form $\spec k$, where $k$ is a field of
characteristic zero, to an arbitrary admissible base scheme.
\end{remark}

\section{Abelian cohomology and flasque resolutions}

In this Section we give some applications of flasque resolutions.
See \cite{ga2} for additional applications.

Let $S$ be a non-empty scheme. We will write $S_{\e\rm{fl}}$
(respectively, $S_{\e\rm{\acute{e}t}}$) for the small fppf
(respectively, \'etale) site over $S$. If $F_{1}$ and $F_{2}$ are
abelian sheaves on $S_{\e\rm{fl}}$ (regarded as complexes
concentrated in degree 0), $F_{1}\lbe\otimes^{\e\mathbf{L}}\lbe
F_{2}$ (respectively, $\rhom\e(F_{1},F_{2})$) will denote the total
tensor product (respectively, right derived hom functor) of $F_{1}$
and $F_{2}$ in the derived category of the category of abelian
sheaves on $S_{\e\rm{fl}}$.

If $\tau=\rm{fl}$ or $\rm{\acute{e}t}$, $G$ is an $S$-group scheme
and $i=0$ or $1$, $H^{\le i}(S_{\e\tau},G)$ will denote the $i$-th
cohomology set of the sheaf on $S_{\e\tau}$ represented by $G$. If
$G$ is commutative, these cohomology sets are in fact abelian groups
and are defined for every $i\geq 0$. When $G$ is smooth, the
canonical map $H^{\le i}(S_{\rm{\acute{e}t}},G)\ra H^{\e
i}(S_{\e\rm{fl}},G)$ is bijective \cite{mi1}, Remark III.4.8(a),
p.123. In this case, the preceding sets will be identified.

\medskip

Let $G$ be a reductive group scheme over $S$ and recall from Section
2 the complex
$(Z\big(\Gtil\,\big)\overset{\partial_{Z}}\longrightarrow Z(G\e))$
determined by $G$. For any integer $i\geq -1$, the $i$-th
\textit{abelian (flat) cohomology group of $G$} is by definition the
hypercohomology group
\begin{equation}\label{abcohom}
H^{\le i}_{\rm{ab}}(S_{\e\rm{fl}}, G\e)={\bh}^{\e
i}\big(S_{\e\rm{fl}},
Z\big(\Gtil\,\big)\overset{\partial_{Z}}\longrightarrow Z(G\e)).
\end{equation}
On the other hand, the $i$-th \textit{dual abelian cohomology group of $\e G$} is the group
\begin{equation}\label{dabcohom}
H^{\le i}_{\rm{ab}}(S_{\e\rm{\acute{e}t}},G^{*})={\bh}^{\e
i}\big(S_{\e\rm{\acute{e}t}},Z(G\e)^{*}\!\overset{\partial_{\lbe
Z}^{*}}\longrightarrow Z\big(\Gtil\,\big)^{\lbe *}\e\big).
\end{equation}
These groups arise in the study of the cohomology sets of $G$ over
$S$ and of various arithmetic objects associated to $G$. See
\cite{bor,ga1,ga2} for more details. Note that, since the Cartier
dual of an $S$-group scheme of multiplicative type is \'etale (in
particular, smooth) over $S$, the groups \eqref{dabcohom} coincide
with the flat hypercohomology groups ${\bh}^{\e
i}\big(S_{\e\rm{fl}},Z(G\e)^{*}\ra Z\big(\Gtil\,\big)^{\lbe
*}\e\big)$. If $S=\spec K$, where $K$ is a field, $H^{\le
i}_{\rm{ab}}(K,G^{*})$ will denote $H^{\le
i}_{\rm{ab}}(S_{\e\rm{\acute{e}t}},G^{*})$. Clearly, if $\kb$ is a
fixed separable algebraic closure of $K$ and $\g=\text{Gal}(\kb/K)$,
$$
H^{\le i}_{\rm{ab}}(K,G^{*})={\bh}^{\e i}\big(\g,Z\big(G\big)
^{*}\be\big(\e\kb\e\big)\ra
Z\big(\Gtil\e\big)^{*}\be\big(\e\kb\e\big)\e\big)
$$
(Galois hypercohomology). Now, by \eqref{seq4}, there exist exact
sequences
\begin{equation}\label{kamb}
\dots\ra H^{\e i-1}(S_{\e\rm{\acute{e}t}},G^{\tor}\e)\ra H^{\le
i+1}(S_{\e\rm{fl}},\mu\e)\ra H^{\e i}_{\rm{ab}}(S_{\e\rm{fl}},G)\ra
H^{\e i}(S_{\e\rm{\acute{e}t}},G^{\tor}\e)\ra\dots.
\end{equation}
and
\begin{equation}\label{kamb*}
\dots\ra H^{\e i-1}(S_{\e\rm{\acute{e}t}},\mu^{\lbe *}\e)\ra H^{\le
i+1}(S_{\e\rm{\acute{e}t}},G^{\tor*}\e)\ra H^{\e
i}_{\rm{ab}}(S_{\e\rm{\acute{e}t}},G^{*})\ra H^{\e
i}(S_{\e\rm{\acute{e}t}},\mu^{\lbe *}\e)\ra\dots.
\end{equation}

\begin{examples}\indent

\begin{enumerate}

\item[(a)] If $G$ is semisimple, i.e., $G^{\tor}=0$, then $H^{\e i}_{\rm{ab}}
(S_{\e\rm{fl}},G)=H^{\le i+1}(S_{\e\rm{fl}},\mu\e)$ and $H^{\e
i}_{\rm{ab}}(S_{\e\rm{\acute{e}t}},G^{*})=H^{\e
i}(S_{\e\rm{\acute{e}t}},\mu^{\lbe *})$.
\item[(b)] If $G$ has trivial fundamental group, i.e., $\mu=0$, then
$H^{\e i}_{\rm{ab}}(S_{\e\rm{fl}},G)=H^{\e
i}(S_{\e\rm{\acute{e}t}},G^{\tor}\e)$ and $H^{\e
i}_{\rm{ab}}(S_{\e\rm{\acute{e}t}},G^{*})=H^{\le
i+1}(S_{\e\rm{\acute{e}t}},G^{\tor*}\e)$.
\end{enumerate}

\end{examples}

The following result is an immediate consequence of Proposition 3.4.

\begin{proposition} Let $S$ be an admissible scheme, let $G$ be a reductive group scheme over $S$ and let $1\ra F\ra H\ra G\ra 1$ be a flasque
resolution of $G$. Set $R=H^{\lbe\tor}$. Then the given resolution
defines isomorphisms $H^{\le i}_{\rm{ab}}(S_{\e\rm{fl}}, G\e)\simeq
{\bh}^{\e i}\big(S_{\e\rm{fl}},F\ra R\e)$ and $H^{\le
i}_{\rm{ab}}(S_{\e\rm{\acute{e}t}}, G^{*}\e)\simeq {\bh}^{\e
i}\big(S_{\e\rm{\acute{e}t}},R^{*}\ra F^{*})$. Further, the given
resolution defines exact sequences
$$
\dots\ra H^{\le i}(S_{\e\rm{\acute{e}t}},F\e) \ra H^{\le
i}(S_{\e\rm{\acute{e}t}},R\e)\ra H^{\le
i}_{\rm{ab}}(S_{\e\rm{fl}},G\e)\ra H^{\le
i+1}(S_{\e\rm{\acute{e}t}},F\e)\ra\dots
$$
and
$$
\dots\ra H^{\le i}(S_{\e\rm{\acute{e}t}},R^{*}\e) \ra H^{\le
i}(S_{\e\rm{\acute{e}t}},F^{*}\e)\ra H^{\le
i}_{\rm{ab}}(S_{\e\rm{\acute{e}t}},G^{*}\e)\ra H^{\le
i+1}(S_{\e\rm{\acute{e}t}},R^{*}\e)\ra\dots.\qed
$$
\end{proposition}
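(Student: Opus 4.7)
The plan is to deduce everything from the canonical quasi-isomorphism provided by Proposition 3.4, namely that the two-term complex $(Z(\Gtil)\overset{\partial_{Z}}\longrightarrow Z(G))$ is quasi-isomorphic to $(F\ra R)$ as complexes of fppf sheaves on $S$. Applying the hyper-derived functor $\mathbb{H}^{\le i}(S_{\e\rm{fl}},-)$ to this quasi-isomorphism and invoking the definition \eqref{abcohom} of abelian cohomology immediately yields
\[
H^{\le i}_{\rm{ab}}(S_{\e\rm{fl}},G)=\mathbb{H}^{\le i}\big(S_{\e\rm{fl}},Z(\Gtil)\ra Z(G)\big)\simeq \mathbb{H}^{\le i}\big(S_{\e\rm{fl}},F\ra R\big),
\]
which is the first claimed isomorphism.

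For the dual statement I would apply Cartier duality term by term to the quasi-isomorphism. As recalled in Section 2, the functor $M\mapsto M^{*}$ is an exact anti-equivalence between $S$-group schemes of finite type and of multiplicative type and finitely generated twisted constant $S$-group schemes, so it carries quasi-isomorphisms to quasi-isomorphisms. Dualizing Proposition 3.4 therefore gives a canonical quasi-isomorphism between $(Z(G)^{*}\overset{\partial_{Z}^{*}}\longrightarrow Z(\Gtil)^{*})$ and $(R^{*}\ra F^{*})$. Since both complexes consist of étale (hence smooth) $S$-group schemes, applying $\mathbb{H}^{\le i}(S_{\e\rm{\acute{e}t}},-)$ and using \eqref{dabcohom} yields the second isomorphism
\[
H^{\le i}_{\rm{ab}}(S_{\e\rm{\acute{e}t}},G^{*})\simeq \mathbb{H}^{\le i}\big(S_{\e\rm{\acute{e}t}},R^{*}\ra F^{*}\big).
\]

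Finally, the two long exact sequences are the standard hypercohomology long exact sequences attached to a two-term complex $[A\ra B]$ concentrated in degrees $-1$ and $0$, obtained from the short exact sequence of complexes $0\ra B\ra [A\ra B]\ra A[1]\ra 0$. Specializing to $[F\ra R]$ on $S_{\e\rm{fl}}$ and to $[R^{*}\ra F^{*}]$ on $S_{\e\rm{\acute{e}t}}$, and substituting the isomorphisms of the previous paragraphs, produces exactly the two displayed sequences; here one also uses that $F$ and $R$ are $S$-tori (hence smooth), so that their fppf and étale cohomology coincide, allowing the first sequence to be written entirely on the étale site. There is essentially no obstacle in this argument beyond invoking Proposition 3.4 and the formalism of hypercohomology; the only point requiring a moment's care is the exactness of Cartier duality, which has already been recorded in Section 2.
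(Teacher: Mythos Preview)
Your proof is correct and is exactly the argument the paper has in mind: the statement is presented there as an immediate consequence of Proposition 3.4, with no further proof given, and you have simply spelled out the routine steps (taking hypercohomology of the quasi-isomorphism, applying the exact Cartier duality functor for the dual version, and invoking the standard long exact sequence of a two-term complex together with the smoothness of $F$ and $R$).
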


\begin{corollary} Let $G$ be a reductive group scheme over an admissible scheme $S$.
Then, for every integer $i\geq -1$, there exist isomorphisms
$$
H^{\le i}_{\rm{ab}}(S_{\e\rm{fl}}, G \e)\simeq{\bh}^{\e i}
(S_{\e\rm{fl}},\pi_{1}\lbe(G\e)\lbe\otimes^{\e\mathbf{L}}\be
\bg_{m,S})
$$
and
$$
H^{\le i}_{\rm{ab}}(S_{\e\rm{\acute{e}t}},G^{*}\e)\simeq{\bh}^{\e i}
(S_{\e\rm{\acute{e}t}},\rhom(\pi_{1}\lbe(G\e),\bz_{\! S})).
$$
\end{corollary}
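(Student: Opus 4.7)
The plan is to reduce the two claimed isomorphisms to Proposition 4.2 by exhibiting the two-term complexes $(F\ra R\e)$ and $(R^{*}\!\ra F^{\le *})$ appearing there as computing $\pi_{1}(G\e)\otimes^{\e\mathbf{L}}\bg_{m,S}$ and $\rhom(\pi_{1}(G\e),\bz_{\!S})$, respectively, in the derived category of abelian sheaves on $S_{\e\rm{fl}}$ and $S_{\e\rm{\acute{e}t}}$.

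First, by Proposition 4.2 applied to a choice of flasque resolution $1\ra F\ra H\ra G\ra 1$ with $R=H^{\lbe\tor}$, it suffices to produce canonical quasi-isomorphisms
\[
(F\lra R\e)\ \simeq\ \pi_{1}(G\e)\otimes^{\e\mathbf{L}}\be\bg_{m,S}
\quad\text{and}\quad
(R^{*}\lra F^{\le *})\ \simeq\ \rhom(\pi_{1}(G\e),\bz_{\!S}).
\]
Both complexes on the left live in degrees $-1,0$ (with $F$ and $R^{\le *}$ in degree $-1$). By Remark 3.8 there is a short exact sequence of \'etale twisted-constant $S$-group schemes
\[
1\ra F_{\be *}\ra R_{*}\ra\pi_{1}(G\e)\ra 1,
\]
which, when read as a complex concentrated in degrees $-1,0$, is a resolution of $\pi_{1}(G\e)$ by $\bz_{\! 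S}$-\emph{flat} sheaves: locally on a connected Galois cover splitting $F$ and $R$, both $F_{\be *}$ and $R_{*}$ become finitely generated free abelian groups, hence flat over $\bz$.

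For the first isomorphism, I invoke \eqref{bas} to identify $F=F_{\be *}\otimes_{\e\bz_{\! S}}\!\bg_{m,S}$ and $R=R_{*}\otimes_{\e\bz_{\! S}}\!\bg_{m,S}$, so the complex $(F\ra R\e)$ coincides with $(F_{\be *}\ra R_{*})\otimes_{\e\bz_{\! S}}\!\bg_{m,S}$. Since $F_{\be *}\ra R_{*}$ is a flat resolution of $\pi_{1}(G\e)$, this tensor product computes the derived tensor product $\pi_{1}(G\e)\otimes^{\e\mathbf{L}}\be\bg_{m,S}$, which gives the desired quasi-isomorphism. Taking $\bh^{\e i}(S_{\e\rm{fl}},-)$ yields the first isomorphism of the corollary.

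For the second isomorphism, by \eqref{bas0} one has $F^{\le *}=F_{\be *}^{\e\vee}=\hom(F_{\be *},\bz_{\!S})$ and $R^{\le *}=R_{*}^{\e\vee}=\hom(R_{*},\bz_{\!S})$, so the complex $(R^{*}\ra F^{\le *})$ equals $\hom(F_{\be *}\ra R_{*},\bz_{\!S})$. The local freeness of $F_{\be *}$ and $R_{*}$ noted above implies that $\mathcal{E}\!xt^{\e j}(F_{\be *},\bz_{\!S})=\mathcal{E}\!xt^{\e j}(R_{*},\bz_{\!S})=0$ for all $j>0$ on $S_{\e\rm{\acute{e}t}}$, so applying $\hom(-,\bz_{\!S})$ term by term to the resolution $(F_{\be *}\ra R_{*})$ of $\pi_{1}(G\e)$ computes $\rhom(\pi_{1}(G\e),\bz_{\!S})$. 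Taking $\bh^{\e i}(S_{\e\rm{\acute{e}t}},-)$ now yields the second isomorphism.

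The independence of these identifications from the chosen flasque resolution is automatic, since Proposition 3.6(iv) and Definition 3.7 make $\pi_{1}(G\e)$ canonical, and all operations above are natural. The only delicate point is the flatness of $F_{\be *}$ and $R_{*}$ as $\bz_{\! S}$-sheaves, which is what allows both the computation of $\otimes^{\e\mathbf{L}}$ and of $\rhom$ via the two-term resolution; this is where admissibility of $S$ is used, through the isotriviality of $F$ and $R$ invoked earlier.
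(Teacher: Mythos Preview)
Your proposal is correct and follows essentially the same route as the paper: invoke Proposition 4.2 to pass to the complex $(F\ra R)$ and its dual, then use Remark 3.8 together with \eqref{bas0} and \eqref{bas} to recognize these complexes as a flat (respectively, locally free) two-term resolution of $\pi_{1}(G)$ tensored with $\bg_{m,S}$ (respectively, hommed into $\bz_{\!S}$), which computes the derived functors. One small remark: the local freeness of $F_{*}$ and $R_{*}$ over $\bz_{\!S}$ holds for any $S$-torus by quasi-isotriviality, so admissibility enters not here but earlier, in the very existence of the flasque resolution (Proposition 3.2).
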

\begin{proof} Let $1\ra F\ra H \ra G \ra 1$ be a flasque resolution of $G$ and
set $R=H^{\tor}$. By the proposition, the given resolution induces
isomorphisms $H^{\le i}_{\rm{ab}}(S_{\e\rm{fl}}, G\e)\simeq
{\bh}^{\e i}\big(S_{\e\rm{fl}},F\ra R\e)$ and $H^{\le
i}_{\rm{ab}}(S_{\e\rm{\acute{e}t}},G^{*}\e)\simeq {\bh}^{\e
i}\big(S_{\e\rm{\acute{e}t}},R^{*}\ra F^{*}\e)$. On the other hand,
by \eqref{bas0}, \eqref{bas} and Remark 3.8, $(F\ra
R)=(F_{*}\otimes_{\e\bz_{\! S}}\bg_{m,S}\ra R_{*}\otimes_{\e\bz_{\!
S}}\bg_{m,S})$ is quasi-isomorphic to $\pi_{1}\lbe( G
\e)\lbe\otimes^{\e\mathbf{L}} \bg_{m,S}$ and $(R^{*}\ra F^{*}\e)=
(\hom_{S\lbe\text{-gr}}(R_{*},\bz_{\! S})\ra
\hom_{S\lbe\text{-gr}}(F_{*},\bz_{\! S}))$ is quasi-isomorphic to
$\rhom(\pi_{1}\lbe(G\e),\bz_{\! S})$. This yields the desired
isomorphisms.
\end{proof}

\begin{proposition} Let $S$ be an admissible scheme and let
$1\ra G_{1}\ra G_{2}\ra G_{3}\ra 1$ be an exact sequence of
reductive group schemes over $S$. Then there exist exact sequences
of abelian groups
$$
\dots\ra H^{\le i}_{\rm{ab}}(S_{\e\rm{fl}},G_{1}\e)\ra H^{\le
i}_{\rm{ab}}(S_{\e\rm{fl}},G_{2}\e)\ra H^{\le
i}_{\rm{ab}}(S_{\e\rm{fl}}, G_{3}\e)\ra H^{\le
i+1}_{\rm{ab}}(S_{\e\rm{fl}},G_{1}\e)\ra\dots
$$
and
$$
\dots\ra H^{\le i}_{\rm{ab}}(S_{\e\rm{\acute{e}t}},G_{3}^{*}\e)\ra
H^{\le i}_{\rm{ab}}(S_{\e\rm{\acute{e}t}},G_{2}^{*}\e)\ra H^{\le
i}_{\rm{ab}}(S_{\e\rm{\acute{e}t}}, G_{1}^{*}\e)\ra H^{\le
i+1}_{\rm{ab}}(S_{\e\rm{\acute{e}t}},G_{3}^{*}\e)\ra\dots.
$$
\end{proposition}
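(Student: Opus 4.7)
The strategy is to combine Theorem 3.14 with Corollary 4.3, reducing the proposition to a standard derived-functor long exact sequence. Nothing genuinely new is required: the substantive work has already been done in establishing the exactness of the $\pi_{1}$ functor and the identification of abelianized cohomology with the hypercohomology of $\pi_{1}$ tensored (respectively hom'd) with $\bg_{m,S}$ (respectively $\bz_{\!S}$).

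Concretely, I would argue as follows. By Theorem 3.14, the given exact sequence of reductive $S$-group schemes induces a short exact sequence
\[
1\lra \pi_{1}(G_{1})\lra \pi_{1}(G_{2})\lra\pi_{1}(G_{3})\lra 1
\]
of \'etale (hence fppf) sheaves on $S$, i.e., a distinguished triangle in the derived category of abelian sheaves on $S_{\e\rm{fl}}$. Applying the exact triangulated functor $(-)\!\otimes^{\e\mathbf{L}}_{\e\bz_{\!S}}\!\bg_{m,S}$ yields a distinguished triangle
\[
\pi_{1}(G_{1})\!\otimes^{\e\mathbf{L}}\!\bg_{m,S}\lra \pi_{1}(G_{2})\!\otimes^{\e\mathbf{L}}\!\bg_{m,S}\lra \pi_{1}(G_{3})\!\otimes^{\e\mathbf{L}}\!\bg_{m,S}\lra \pi_{1}(G_{1})\!\otimes^{\e\mathbf{L}}\!\bg_{m,S}[1]
\]
in the derived category. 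Taking hypercohomology on $S_{\e\rm{fl}}$ produces a long exact sequence, which by Corollary 4.3 is precisely the first asserted sequence.

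For the dual statement I would proceed in the same way, applying instead the (contravariant, exact, triangulated) functor $\rhom(-,\bz_{\!S})$ to the short exact sequence of $\pi_{1}$'s, obtaining a distinguished triangle
\[
\rhom(\pi_{1}(G_{3}),\bz_{\!S})\lra\rhom(\pi_{1}(G_{2}),\bz_{\!S})\lra\rhom(\pi_{1}(G_{1}),\bz_{\!S})\lra\rhom(\pi_{1}(G_{3}),\bz_{\!S})[1]
\]
in the derived category of abelian sheaves on $S_{\e\rm{\acute{e}t}}$. Taking hypercohomology on $S_{\e\rm{\acute{e}t}}$ and identifying the terms via Corollary 4.3 yields the second asserted long exact sequence.

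There is no genuine obstacle: once Theorem 3.14 and Corollary 4.3 are in hand, the proposition is simply the long exact sequence associated to a short exact sequence via a (derived) additive functor. The only minor point worth remarking is that, since $\pi_{1}(G)$ is a finitely generated twisted-constant $S$-group scheme (an abelian sheaf), the derived tensor product and the derived hom above make sense as objects of the appropriate derived categories, and the triangulated nature of these functors guarantees that short exact sequences yield distinguished triangles; all the heavy lifting has been done in Sections 2 and 3.
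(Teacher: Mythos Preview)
Your proposal is correct and follows exactly the route the paper takes: the paper's proof is the single line ``This follows from the above corollary and Theorem 3.14,'' and you have simply unpacked this by writing down the distinguished triangles obtained from the short exact sequence of $\pi_{1}$'s via $(-)\otimes^{\mathbf{L}}\bg_{m,S}$ and $\rhom(-,\bz_{\!S})$, then taking hypercohomology and invoking Corollary 4.3.
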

\begin{proof} This follows from the above corollary and Theorem 3.14.
\end{proof}

\begin{remark} In \cite{bor2}, M. Borovoi has extended the constructions of Section 3
to an arbitrary non-empty base scheme and shown that the functor
$\pi_{1}(G)$ is defined and exact in this generality. Further, it
follows from [op.cit.] that $\pi_{1}\lbe( G
\e)\lbe\otimes^{\e\mathbf{L}} \bg_{m,S}$ and
$\rhom(\pi_{1}\lbe(G\e),\bz_{\! S})$ are quasi-isomorphic to
$\big(Z\big(\Gtil\,\big)\ra Z(G\e)\big)$ and $\big(Z(G\e)^{*}\ra
Z\big(\Gtil\,\big)^{*}\big)$, respectively. Consequently, both
Corollary 4.3 and Proposition 4.4 are valid over any non-empty base
scheme when $\pi_{1}(G)$ is defined as in \cite{bor2}.
\end{remark}

\begin{proposition} Let $G$ be a reductive group scheme over an
admissible scheme $S$, let $1\ra F \ra H \ra G \ra 1$ be a flasque
resolution of $G$ and set $R=H^{\lbe\tor}$. Assume that $R$ is split
by a connected Galois cover $S^{\e\prime}\ra S$ such that every
finite \'etale subcover $S^{\e\prime\prime}\ra S$ of
$S^{\e\prime}\ra S$ satisfies ${\rm{Pic}}\,S^{\e\prime\prime}=0$.
Then the given resolution defines an isomorphism
$$
H^{\le 1}_{\rm{ab}}(S_{\e\rm{fl}},G\e)\simeq\krn\!\!\left[ H^{\le
2}(S_{\e\rm{\acute{e}t}},F\e)\ra H^{\le
2}(S_{\e\rm{\acute{e}t}},R\e)\right].
$$
\end{proposition}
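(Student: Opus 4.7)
The plan is to apply the first exact sequence of Proposition 4.2 at $i=1$, which reads
$$
H^{1}(S_{\e\rm{\acute{e}t}},F)\ra H^{1}(S_{\e\rm{\acute{e}t}},R)\ra H^{\le 1}_{\rm{ab}}(S_{\e\rm{fl}},G)\ra H^{2}(S_{\e\rm{\acute{e}t}},F)\ra H^{2}(S_{\e\rm{\acute{e}t}},R).
$$
The claimed description of $H^{\le 1}_{\rm{ab}}(S_{\e\rm{fl}},G)$ as the kernel on the right is then immediate once I show $H^{1}(S_{\e\rm{\acute{e}t}},R)=0$. So the whole proof reduces to computing the first \'etale cohomology of the quasi-trivial torus $R$ under the Picard hypothesis.

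Next I would unpack quasi-triviality of $R$. Since $R$ is split by $S^{\e\prime}\ra S$ with (finite) Galois group $\varDelta$, the character module $R^{*}(S^{\e\prime})$ is a permutation $\varDelta$-module, hence decomposes as a direct sum $\bigoplus_{i}\bz[\e\varDelta/\varTheta_{i}\e]$ of $\varDelta$-orbits. Under the anti-equivalence between $S$-groups of multiplicative type split by $S^{\e\prime}\ra S$ and finitely generated $\varDelta$-modules recalled in Section 2, this translates into an isomorphism of $S$-tori
$$
R\simeq\prod_{i}{\rm{Res}}_{S_{i}^{\e\prime\prime}/S}(\bg_{m,S_{i}^{\e\prime\prime}}),
$$
where $S_{i}^{\e\prime\prime}=(S^{\e\prime})^{\varTheta_{i}}$ is a finite \'etale subcover of $S^{\e\prime}\ra S$ corresponding to $\varTheta_{i}\subseteq\varDelta$.

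Finally, for each finite \'etale $\pi_{i}\colon S_{i}^{\e\prime\prime}\ra S$, the pushforward $\pi_{i*}$ is exact on \'etale sites (finite morphisms have trivial higher direct images on the \'etale site), so Leray (i.e.\ Shapiro's lemma in this guise) gives
$$
H^{1}\big(S_{\e\rm{\acute{e}t}},{\rm{Res}}_{S_{i}^{\e\prime\prime}/S}(\bg_{m})\big)\simeq H^{1}\big(S_{i,\e\rm{\acute{e}t}}^{\e\prime\prime},\bg_{m}\big)\simeq\pic S_{i}^{\e\prime\prime}.
$$
By hypothesis each $\pic S_{i}^{\e\prime\prime}$ vanishes, so $H^{1}(S_{\e\rm{\acute{e}t}},R)=0$, which concludes the proof.

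There is no serious obstacle here: the argument is essentially a direct corollary of Proposition 4.2 combined with the structure theorem for quasi-trivial tori. The only point requiring a moment of care is verifying that the decomposition of $R$ as a product of Weil restrictions is valid in the relative setting over the admissible base $S$, but this is exactly what the isotriviality of $R$ (ensured by the admissibility of $S$) together with the anti-equivalence of Section 2 provides.
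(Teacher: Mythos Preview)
Your proof is correct and follows the same approach as the paper: apply the long exact sequence of Proposition~4.2 at $i=1$ and reduce to the vanishing of $H^{1}(S_{\rm{\acute{e}t}},R)$. The only difference is that the paper dispatches this vanishing by a one-line citation to \cite{cts2}, proof of Theorem~4.1, whereas you spell out the underlying argument (decompose the permutation module, identify $R$ as a product of Weil restrictions along the subcovers $S_{i}^{\e\prime\prime}\ra S$, and invoke Shapiro/Leray plus $H^{1}_{\rm{\acute{e}t}}(\,\cdot\,,\bg_m)\simeq\pic$).
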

\begin{proof} Since $R$ is quasi-trivial, the hypothesis implies that
$H^{\le 1}(S_{\e\rm{\acute{e}t}},R\e)=0$ (cf. \cite{cts2}, proof of
Theorem 4.1, p.167). The result now follows from Proposition 4.2.
\end{proof}

\begin{remark}
By \cite{bou}, Proposition II.5.3.5, p.113 (see also
[op.cit.], Remark on p.112), the above proposition applies to any
reductive group scheme over the spectrum of an integral, noetherian and geometrically unibranched local
ring (e.g., a field).
\end{remark}

\appendix

\end{document}